\newcommand{\R}{\mathds R}
\newcommand{\N}{\mathds N}
\newcommand{\Q}{{\mathcal Q}}
\newcommand{\Ps}{{\mathcal P}}
\title[Curvature computations in Finsler Geometry]{Curvature computations in Finsler Geometry using a distinguished class of anisotropic connections}
\author[M. A. Javaloyes]{Miguel \'Angel Javaloyes}
\address{Departamento de Matem\'aticas, \hfill\break\indent
Universidad de Murcia, \hfill\break\indent
Campus de Espinardo,\hfill\break\indent
30100 Espinardo, Murcia, Spain}
\email{majava@um.es}
\thanks{2000 {\em Mathematics Subject Classification:} Primary  53C50, 53C60\\
\textbf{Key words:} Anisotropic linear connections, Finsler Geometry, Jacobi operator, Bianchi Identities.}
\begin{document}
\newtheorem{thm}{Theorem}[section]
\newtheorem{prop}[thm]{Proposition}
\newtheorem{lemma}[thm]{Lemma}
\newtheorem{cor}[thm]{Corollary}
\theoremstyle{definition}
\newtheorem{defi}[thm]{Definition}
\newtheorem{notation}[thm]{Notation}
\newtheorem{exe}[thm]{Example}
\newtheorem{conj}[thm]{Conjecture}
\newtheorem{prob}[thm]{Problem}
\newtheorem{rem}[thm]{Remark}

\begin{abstract}
We show how to compute tensor derivatives and curvature tensors using affine connections. This allows for all computations to be obtained without using coordinate systems, in a way that parallels the computations appearing in classical  Riemannian Geometry. In particular, we obtain Bianchi identities for the curvature tensor of any anisotropic connection, we compare the  curvature tensors of any two anisotropic connections, and we find a family of anisotropic connections which are well suited to study the geometry of Finsler metrics.
\end{abstract}

\maketitle


\section{Introduction}
Traditionally, Finsler Geometry is associated with lengthy computations in coordinates. This is due to the dependence on directions of all the elements, which allows for a large generality of the metrics, but sometimes makes it difficult to understand the geometric meaning of certain quantities. 
In order to overcome these difficulties, we will use affine connections $\nabla^V$, which are defined for every vector field $V$ which is non-zero everywhere. The connections $\nabla^V$ can be interpreted as osculating affine connections in the same way as one obtains the osculating metric $g_V$ of a Finsler metric by fixing at every point $p\in M$ the direction of $V_p$ in the fundamental tensor, namely, $(g_V)_p=g_{V_p}$, where $g$ is the fundamental tensor in \eqref{fundten}.  This approach was first considered in \cite{Mat80,Rund}, later in \cite[\S 7]{Sh01} and recently in \cite{Jav14a,Ja14b,JaSo14}.

Here we will go a step further. First, we consider anisotropic connections in a manifold $M$, which are not exactly connections on fiber bundles, but especially adapted to the dependence on the direction (see Definition \ref{aniconnection} and \cite[\S 4.4]{Ja19} for the relationship with connections on the vertical bundle). Then we will use the anisotropic tensor calculus developed in \cite{Ja19}, and the formulas \eqref{productrule}, \eqref{partialT} and \eqref{RvRV}, wherein the derivative of a tensor and the curvature are computed using $\nabla^V$.
In order to take advantage of this approach, we make a fundamental observation in Proposition \ref{assump}: that there is a privileged choice of the extension $V$ which allows one to compute the derivative of a tensor. This choice has the property that at a fixed point $p$, the vector field $V$ is parallel in all directions, namely, $(\nabla^V_XV)_p=0$ for all vector fields $X$. This simplifies dramatically the computations involving curvature tensors and derivatives. Indeed, it reduces, for example, the proof of the Bianchi identities to the classical case of an affine connection in a manifold, \S \ref{bianchiid}. It also allows us to relate the curvature tensors of two different anisotropic connections using the difference tensor, \S \ref{curvdiff}. In particular, this relation will lead us to distinguish a family of connections which are well suited for studying Finsler metrics, \S \ref{goodcon}. Amongst these connections, one finds the Berwald and the Chern connections, and for all of them, it is possible to derive formulas for the first and the second variations of the energy (Prop. \ref{variations}), determining the same Jacobi operator, Jacobi equation and flag curvature (Prop. \ref{Jacobioper}). Moreover, these connections can also be related with the Levi-Civita connection of the osculating metric (Prop. \ref{oscul}).

The paper is organized as follows. In \S \ref{anicon}, we give the basic notions of anisotropic tensor calculus, previously introduced in \cite{Ja19}. In particular, we define anisotropic tensors, anisotropic connections,  and finally the tensor derivation  and the curvature tensor  associated with an anisotropic connection. In  \S \ref{covariant}, we give the notion of anisotropic covariant derivation and then of auto-parallel curve. We also establish the Jacobi equation of an auto-parallel curve in Prop. \ref{Jacoboper} and give a condition in terms of the difference tensor (see \eqref{differencetensor}), which implies that two different anisotropic connections determine the same Jacobi operator. In \S \ref{parallel}, we explain the different possibilities for parallel transport with an anisotropic connection. In subsection \S \ref{bianchiid}, we obtain the anisotropic  Bianchi identities and in \S \ref{curvdiff},  the comparison of the curvature tensors of two anisotropic connections. Section \ref{goodcon} is devoted to the study of certain connections which are well suited to study Finsler metrics. These connections allow us to  obtain formulas for the first and the second variation of the energy. 
\section{Anisotropic tensor calculus and affine connections}\label{anicon}

Let $M$ be a smooth manifold of dimension $n$, $TM$ its tangent bundle and $T^*M$ its contangent bundle, with $\pi:TM\rightarrow M$ and $\tilde\pi :T^*M\rightarrow M$, the natural projections. Given  an open subset $A$ of the tangent bundle $TM$ with $\pi(A)=M$, we can use the restriction $\pi|_A:A\subset TM\rightarrow M$ to obtain two {\em pull-back vector bundles } over $A$ by lifting $\pi$ and $\tilde \pi$, which are respectively denoted by $\pi_A^*:\pi_A^*TM\rightarrow A$ and $\tilde \pi_A^*:\pi^*_AT^*M\rightarrow A$:
\begin{displaymath}
\xymatrix{ \pi_A^*TM \ar[d]_{\pi^*_A} & TM \ar[d]^{\pi} \\
	A\subset TM \ar[r]^{\quad\,\,\pi_A=\pi|_A}  & M  }
\quad \quad \xymatrix{ \pi_A^*T^*M \ar[d]_{\tilde\pi^*_A} & T^*M \ar[d]^{\tilde\pi} \\
	A\subset TM \ar[r]^{\quad\,\,\pi_A=\pi|_A}  & M  }
\end{displaymath}
 Observe that for every $v\in A$, we have that $(\pi_A^*)^{-1}(v)=T_{\pi(v)}M$ and $(\tilde \pi_A^*)^{-1}(v)=T^*_{\pi(v)}M$. Then a section of $\pi^*_A$ (resp. $\tilde\pi^*_A$) can be thought as a smooth map $A\ni v\rightarrow X_v\in TM$ (resp. $A\ni v\rightarrow \theta_v\in T^*M$) in such a way that $X_v\in T_{\pi(v)}M$ (resp. $\theta_v\in T^*_{\pi(v)}M$). The subset of (smooth) sections of $\pi^*_ATM$ will be denoted by ${\mathfrak T}^1_0(M,A)$, while the subset of smooth sections of $\tilde\pi^*_AT^*M$ will be denoted by ${\mathfrak T}^0_1(M,A)$. 
Then we define an {\em $A$-anisotropic tensor $T$ of type $(r,s)$, $r,s\in \N\cup \{0\}$, $r+s>0$, }  as an ${\mathcal F}(A)$-multilinear map 
\[T:{\mathfrak T}^0_1(M,A)^r\times{\mathfrak T}^1_0(M,A)^s\rightarrow {\mathcal F}(A), \]
where ${\mathcal F}(A)$ is the subset of smooth real functions on $A$, namely, $f:A\rightarrow \R$. The space of $A$-anisotropic tensors of type $(r,s)$ is denoted by ${\mathfrak T}^r_s(M,A)$, while by convention ${\mathfrak T}^0_0(M,A)\equiv {\mathcal F}(A)$. The ${\mathcal F}(A)$-multilinearity implies that for every $v\in A$, $T$ determines a multilinear map 
\[T_v:(T^*_{\pi(v)}M)^r\times (T_{\pi(v)}M)^s\rightarrow \R. \]
As a consequence, given an open subset $\Omega\subset M$,   it makes sense to consider the restriction \[T:{\mathfrak T}^0_1(\Omega,T\Omega\cap A)^r\times{\mathfrak T}^1_0(\Omega,T\Omega\cap A)^s\rightarrow {\mathcal F}(T\Omega\cap A).\] In particular, given a system of coordinates $(\Omega,\varphi)$, where $\Omega$ is an open subset of $M$ and $ \varphi:\Omega\rightarrow U\subset \R^n$, a chart of $M$, we define the coordinates of $T$ as functions $T_{j_1j_2\ldots j_s}^{i_1i_2\ldots i_r}:A\cap T\Omega\rightarrow \R$ defined as
\[T_{j_1j_2\ldots j_s}^{i_1i_2\ldots i_r}(v)=T_v(dx^{i_1},\ldots,dx^{i_r},\partial_{j_1},\ldots,\partial_{j_s}),\]
where $\partial_1,\ldots,\partial_n$ denotes the frame of partial vector fields associated with the coordinate system $(\Omega,\varphi)$ and $dx^1,\ldots,dx^n$, its dual basis.
 Observe that  the space of smooth vector fields on $M$, denoted by ${\mathfrak X}(M)$ (resp. the space of smooth one-forms on $M$, denoted by ${\mathfrak X}^*(M)$) can be viewed as a subset of ${\mathfrak T}^1_0(M,A)$ (resp. ${\mathfrak T}^0_1(M,A)$),  since a vector field $X\in {\mathfrak X}(M)$ (resp. $\theta\in{\mathfrak X}^*(M)$) can be identified with the smooth section $\tilde {\mathcal X}$ (resp. $\tilde\theta$) defined as $ \tilde {\mathcal X}_v=X_{\pi(v)}$ (resp. $\tilde\theta_v=\theta_{\pi(v)}$).
By the ${\mathcal F}(A)$-multilinearity, it is enough to define the tensor as
\begin{equation}\label{Treduction}
T:{\mathfrak X}^*(M)^r\times{\mathfrak X}(M)^s\rightarrow {\mathcal F}(A), 
\end{equation}
which then will be extended by the  ${\mathcal F}(A)$-multilinearity using a local frame in ${\mathfrak X}(M)$ (resp. ${\mathfrak X}^*(M)$), see also \cite[Remark 2]{Ja19}. 

One can also consider an ${\mathcal F}(A)$-multilinear map 
\begin{equation}\label{forthecurv}
T: \mathfrak{T}^1_0(M,A)^s\rightarrow
\mathfrak{T}^1_0(M,A),
\end{equation} 
which determines the $A$-anisotropic tensor of type $(1,s)$
$\bar{T}:\mathfrak{T}^0_1(M,A)\times
\mathfrak{T}^1_0(M,A)^s\rightarrow {\mathcal F}(A)$ 
defined by  
\begin{equation}\label{interpr}
\bar{T}(\theta,{\mathcal X}_1,\ldots,{\mathcal X}_s)=\theta(T({\mathcal X}_1,\ldots,{\mathcal X}_s)).
\end{equation}
 As in classical tensor calculus, $T$ will be considered as  a tensor field itself, using the formula above only when necessary.

	We will say that a vector field $V$ defined on an open subset $\Omega\subset M$ is {\em $A$-admissible} if $V_p\in A$ for every $p\in \Omega$.
In such a case, given an $A$-anisotropic tensor $T\in\mathfrak{T}^r_s(M,A)$, we can define a (classical) tensor $T_V\in \mathfrak{T}^r_s(\Omega)$ in such a way that $(T_V)_p=T_{V_p}$ for every $p\in \Omega$. 

As a result of the dependence on directions of $A$-anisotropic tensors, one can define  derivatives on the vertical bundle.
\begin{defi}\label{defi:vert}
	Given an $A$-anisotropic tensor $T\in \mathfrak{T}^{r}_s(M,A)$, we define its {\em vertical derivative} as the tensor  $\partial^\nu T\in \mathfrak{T}^{r}_{s+1}(M,A)$ given by
	\[(\partial^\nu T)_v(\theta^1,\ldots,\theta^r,X_1,\ldots,X_s,Z)=\frac{\partial}{\partial t} T_{v+tZ_{\pi(v)}}(\theta^1,\ldots,\theta^r,X_1,\ldots,X_s)|_{t=0}\]
	for any $v\in A$ and  $(\theta^1,\theta^2,\ldots,\theta^r,X_1,\ldots,X_s,Z)\in \mathfrak{X}^*(M)^{r}\times \mathfrak{X}(M)^{s+1}$, and an analogous definition is made for $A$-anisotropic tensors of the type \eqref{forthecurv}.  
\end{defi}
Recall that in Finsler Geometry, the linear connections used to study geodesics and curvature are linear connections on the vertical bundle. Along this paper we will use a different notion of connection introduced in \cite[\S 7.1]{Sh01} and studied in \cite{Ja19}, which simplifies some computations.

\begin{defi}\label{aniconnection}
	An  {\em$A$-anisotropic (linear) connection} is  a map
	\[\nabla: A\times \mathfrak{X}(M)\times\mathfrak{X}(M)\rightarrow TM,\quad\quad (v,X,Y)\mapsto\nabla^v_XY
	\in T_{\pi(v)}M,\]
	such that
	\begin{enumerate}[(i)]
		\item $\nabla^v_X(Y+Z)=\nabla^v_XY+\nabla^v_XZ$, for any $X,Y,Z\in  {\mathfrak X}(M)$,
		\item $\nabla^v_X(fY)=X(f) Y_{\pi(v)}+f(\pi(v)) \nabla^v_XY $ for any $f\in {\mathcal F}(M)$, $X,Y\in  {\mathfrak X}(M)$,
		\item for any $X,Y\in  {\mathfrak X}(M)$,  $\nabla_XY\in {\mathfrak T}^1_0(M,A)$ (considered as a map $A\ni v\rightarrow \nabla^v_XY$),
		\item $\nabla^v_{fX+hY}Z=f(\pi(v))\nabla^v_XZ+h(\pi(v)) \nabla^v_YZ$, for any $f,h\in {\mathcal F}(M)$, $X,Y,Z\in  {\mathfrak X}(M)$.
	\end{enumerate}
\end{defi}
 For the relation of this new notion of $A$-anisotropic  connection with classical linear connections see \cite[\S 4.4]{Ja19}.  Given an $A$-anisotropic connection $\nabla$ and a vector field $X\in {\mathfrak X}(M)$, it is possible to define an $A$-anisotropic tensor derivation $\nabla_X$  (see \cite[\S 2.2]{Ja19} for the general definition) in the space of tensors ${\mathfrak T}^r_s(M,A)$ such that for any function $h\in{\mathcal F}(A)$, $\nabla_Xh\in{\mathcal F}(A)$ is determined at $v\in A$ by
 \begin{equation}\label{derh}
\nabla_X h (v) = X_{\pi(v)}(h(V))-(\partial^\nu h)_v(\nabla_X^vV),
 \end{equation}
 where $V$ is any $A$-admissible vector field extending $v$, namely, $V_{\pi(v)}=v$. Observe that the expression in \eqref{derh} does not depend on the choice of $V$ (see \cite[Lemma 9]{Ja19}). Moreover, if $\theta\in {\mathfrak X}^*(M)$, then $\nabla_X\theta\in \mathfrak{T}^0_1(M,A)$  is determined by
 \begin{equation}\label{dertheta}
 (\nabla_X\theta)_v ({ Y})=X_{\pi(v)}(\theta( {Y}))-\theta(\nabla^v_X{Y}), \quad \text{for any $Y\in \mathfrak{X}(M)$.}
 \end{equation}
 Finally, for an arbitrary $A$-anisotropic tensor $T\in {\mathfrak T}^r_s(M,A)$, we define the tensor derivative
 \begin{align}\label{productrule}
(\nabla_XT)(\theta^1,\ldots,\theta^r,X_1,\ldots,X_s))=& 
 \nabla_X(T(\theta^1,\ldots,\theta^r,X_1,\ldots, X_s))\nonumber\\
 &-\sum_{i=1}^r T(\theta^1,\ldots,\nabla_X\theta^i,\ldots, \theta^r,
 X_1,\ldots,X_s)\nonumber\\
 &-\sum_{j=1}^s T(\theta^1,\ldots, \theta^r,
 X_1,\ldots,\nabla_XX_j,\ldots,X_s),
 \end{align}
 for any $(\theta^1,\theta^2,\ldots,\theta^r,X_1,\ldots,X_s)\in \mathfrak{X}^*(M)^{r}\times \mathfrak{X}(M)^{s}$ (see \cite[Theorem 11]{Ja19} and recall that $\nabla_X$ is an $A$-anisotropic derivation as in \cite[Definition 8]{Ja19}). Observe that the same formula \eqref{productrule} with $r=0$ also holds for tensors of the type \eqref{forthecurv}.
 We can also define the {\em torsion} of $\nabla$ as
 \begin{equation}\label{torsion}
 {\mathcal T}_v(X,Y)=\nabla^v_XY-\nabla^v_YX-[X,Y], \quad \text{for any $X,Y\in \mathfrak{X}(M)$.}
 \end{equation}
 We say that an $A$-anisotropic connection is {\em torsion-free} if $\mathcal T=0$.
\begin{rem}\label{extensions}
	Recall that even if $\nabla_X\theta$ in \eqref{dertheta}, $\nabla_XT$ in \eqref{productrule} and $\mathcal T$ in \eqref{torsion} are defined only for one-forms and vector fields, they can be extended to arbitrary elements of ${\mathfrak T}^0_1(M,A)$ and ${\mathfrak T}^1_0(M,A)$ by ${\mathcal F}(A)$-multilinearity. Moreover, $\nabla$ also can be extended to ${\mathfrak T}^1_0(M,A)\times {\mathfrak T}^1_0(M,A)$ using the Leibnitz rule and \eqref{derh}. One can also obtain the following formula, when given an $A$-admissible vector field $V$ on an open subset $\Omega$ and  ${\mathcal X,\mathcal Y}\in {\mathfrak T}^1_0(M,A)$,
	\begin{equation}\label{DY}
\nabla^v_{\mathcal X}{\mathcal Y}=	\nabla^v_{\mathcal X}({\mathcal Y}_V)-(\partial^\nu {\mathcal Y})_v(\nabla_{\mathcal X}^VV),
	\end{equation}
	where $(\partial^\nu {\mathcal Y})_v(z)=\left.\frac{d}{dt} {\mathcal Y}(v+tz)\right|_{t=0}$, for any vector $z\in T_{\pi(v)}M$ (see \cite[Eq. (12)]{Ja19}), and recall that $({\mathcal Y}_V)_p={\mathcal Y}_{V_p}$ for every $p\in \Omega$. When $T$ is an $A$-anisotropic tensor as in \eqref{forthecurv}, this can be used to compute the first term of $\nabla_XT$ in \eqref{productrule} with the help of the associated affine connection $\nabla^V$ for a given $A$-admissible vector field $V$ which extends $v\in A$. Namely, 
	\begin{equation}\label{partialT}
	\nabla_X(T(X_1,\ldots, X_s))(v)=X_{\pi(v)}(T_V(X_1,\ldots, X_s))-(\partial^\nu T)_v(X_1,\ldots,X_s,\nabla_X^VV),
	\end{equation}
where $X,X_1,\ldots,X_s\in {\mathfrak X}(M)$ (see \cite[Eq. (17)]{Ja19} for more details). 
 \end{rem}
Given a system of coordinates $(\Omega,\varphi)$, we will define the Christoffel symbols  of the $A$-anisotropic connection $\nabla$ as the functions $\Gamma^k_{\,\, ij}:T\Omega\cap A\rightarrow \R$ determined by
\[\nabla^v_{\partial_i}\partial_j=\Gamma^k_{\,\, ij}(v)\left(\partial_k\right)_{\pi(v)}.\]
It is easy to check that $\nabla$ is torsion-free, namely, $\mathcal T=0$, if and only if the Christoffel symbols $\Gamma^k_{\,\, ij}$ are symmetric in $i$ and $j$.
\subsection{The curvature tensor of an $A$-anisotropic connection} \label{curvanis}
Given an $A$-anisotropic  connection $\nabla$, we can define the associated curvature tensor $R_v:{\mathfrak X}(M)\times {\mathfrak X}(M)\times {\mathfrak X}(M)\rightarrow T_{\pi(v)}M$, as follows
\begin{equation}\label{Rv}
R_v(X,Y)Z=\nabla^v_X(\nabla_YZ)-\nabla^v_Y(\nabla_XZ)-
\nabla^v_{[X,Y]}Z,
\end{equation}
for any $v\in A$ and $X,Y,Z\in {\mathfrak X}(M)$ (recall part $(iii)$ of Def. \ref{aniconnection} and Remark~\ref{extensions} for the extension of $\nabla$ to  ${\mathfrak T}^1_0(M,A)\times {\mathfrak T}^1_0(M,A)$). It is straightforward to check that $R$ is an ${\mathcal F}(A)$-multilinear map, and then an $A$-anisotropic tensor as in \eqref{forthecurv}. Furthermore, it is anti-symmetric in $X$ and $Y$.

Recall that given an
$A$-admissible vector field $V$ in $\Omega\subset M$, an $A$-anisotro\-pic connection $\nabla$ provides an affine connection $\nabla^V$ on $\Omega$ defined as $(\nabla^V_XY)_p=\nabla^v_XY$ for any $X,Y\in \mathfrak{X}(M)$, being $v=V_p$. Our next aim is to express the curvature tensor in terms of the elements associated with  $\nabla^V$. First, we need to introduce the following tensors:
\begin{align*}
P_v(X,Y,Z)&=\frac{\partial}{\partial t}\left(\nabla^{v+tZ(\pi(v))}_XY\right)|_{t=0},\\
R^V(X,Y)Z&=\nabla^V_X\nabla^V_YZ-\nabla^V_Y\nabla^V_XZ-
\nabla^V_{[X,Y]}Z,\end{align*}
where  $X,Y,Z\in{\mathfrak X}(\Omega)$. Observe that $P$ is an $A$-anisotropic tensor, but $R^V$ is not. This is because   $R^V$ does depend on the particular choice of $V$ as we will see later.  The $A$-anisotropic tensor $P$ will be called the {\it vertical derivative} of $\nabla$ and the connection $\nabla$ is said to be {\it Berwald} if and only if $P=0$.  Moreover, in a natural system of coordinates of the tangent bundle $(T\Omega,\tilde{\varphi})$, associated with  a coordinate system $(\Omega,\varphi)$ on $M$, one has
\begin{equation}\label{Pcoord}
P_v(u,w,z)= u^iw^j z^k \frac{\partial\Gamma_{\,\,ij}^{l}}{\partial y^k}(v)\left(\partial_l\right)_{\pi(v)}
\end{equation}
for every $v\in A$, and $u,w,z\in T_{\pi(v)}M$ and being $u^i, w^i$ and $z^i$ the coordinates of $u,w,z$. As usual, we denote the coordinates of a point $v\in T\Omega$ as 
\begin{equation}\label{naturalcoord}
\tilde\varphi=(x,y)=(x^1,x^2,\ldots,x^n,y^1,y^2,\ldots,y^n),
\end{equation}
and we use the {\it  Einstein summation convention} when possible, omitting the coordinate functions $\varphi$ and $\tilde\varphi$ to avoid clutter in equations. It follows from \eqref{Pcoord} that if $\nabla$ is torsion-free, then $P$ is symmetric in the first two components. 
\begin{rem}\label{Pv=0}
If the $A$-anisotropic connection is positive homogeneous of degree zero, namely, $\nabla^{\lambda v}=\nabla^v$, then it follows that $P_v(u,w,v)=0$ for every $v\in A$ and $u,w\in T_{\pi(v)}M$.
\end{rem}
\begin{prop}\label{curv}
Let $\nabla$ be an $A$-anisotropic connection and $\Omega\subset M$, an open subset. Then for any $v\in A$, we have that
\begin{equation}\label{RvRV}
R_v(X,Y)Z=(R^V)_p(X,Y)Z-(P_V)_p(Y,Z,\nabla^V_XV)+(P_V)_p(X,Z,\nabla^V_YV),
\end{equation}
where $V,X,Y,Z\in {\mathfrak{X}}(\Omega)$, being $V$ an $A$-admissible extension of $v$ and $p=\pi(v)$. Moreover, in a natural system of coordinates $(T\Omega,\tilde\varphi)$ of $TM$, we have 
\begin{multline}
R_v(X,Y)Z=
\left[Z^i(p)Y^j(p)X^m(p) (\frac{\partial\Gamma_{\,\,ji}^k}{\partial x^m}
(v)-v^l\Gamma^h_{\,\, ml}(v)\frac{\partial \Gamma^k_{\,\, ji}}{\partial y^h}(v))\right.\\ \left.-Z^i(p)X^j(p) Y^m(p)(\frac{\partial \Gamma_{\,\,ji}^k}{\partial x^m}(v)-v^l\Gamma_{\,\,ml}^h(v)\frac{\partial \Gamma_{\,\,ji}^k}{\partial y^h}(v))\right.\\\left.
+Z^i(p)Y^j(p) X^m(p) \left(\Gamma_{\,\,ji}^l(v) \Gamma_{\,\,ml}^k(v)-\Gamma_{\,\,mi}^l(v) \Gamma_{\,\,jl}^k(v)\right)\right]\left(\partial_k\right)_{\pi(v)},\label{Firstcur2}
\end{multline}
where $X^i,Y^i,Z^i, v^i$ are the coordinates of $X,Y,Z,v$, respectively.
\end{prop}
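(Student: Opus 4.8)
The plan is to prove the two displayed formulas in Proposition~\ref{curv} by direct computation, treating \eqref{RvRV} first and then specializing to coordinates to obtain \eqref{Firstcur2}. The starting point is the defining formula \eqref{Rv} for $R_v(X,Y)Z$, which involves the iterated anisotropic covariant derivatives $\nabla^v_X(\nabla_YZ)$. The key difficulty is that $\nabla_YZ$ is an element of ${\mathfrak T}^1_0(M,A)$ rather than an honest vector field, so the outer derivative $\nabla^v_X(\nabla_YZ)$ must be computed using the extension formula \eqref{DY} from Remark~\ref{extensions}, which produces a correction term involving the vertical derivative $\partial^\nu$ evaluated on $\nabla^V_XV$. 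First I would fix an $A$-admissible extension $V$ of $v$ and rewrite each of the two second-derivative terms using \eqref{DY}, replacing $\nabla^v_{\mathcal X}{\mathcal Y}$ by $\nabla^v_{\mathcal X}({\mathcal Y}_V) - (\partial^\nu{\mathcal Y})_v(\nabla^V_{\mathcal X}V)$ with ${\mathcal Y}=\nabla_YZ$ and ${\mathcal Y}=\nabla_XZ$ respectively.

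The next step is to recognize the terms that assemble into the affine curvature $(R^V)_p(X,Y)Z$. Since $(\nabla^V_XY)_p=\nabla^v_XY$ by definition, the pieces $\nabla^v_X((\nabla_YZ)_V)$ are exactly $\nabla^V_X(\nabla^V_YZ)$ evaluated at $p$, and similarly for the $Y\leftrightarrow X$ swap; together with the bracket term $\nabla^v_{[X,Y]}Z=(\nabla^V_{[X,Y]}Z)_p$ these reproduce $R^V(X,Y)Z$ at $p$. The remaining pieces are the two vertical-derivative corrections $-(\partial^\nu(\nabla_YZ))_v(\nabla^V_XV)$ and $+(\partial^\nu(\nabla_XZ))_v(\nabla^V_YV)$. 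The task is then to identify $\partial^\nu(\nabla_YZ)$ with the vertical-derivative tensor $P$ of the connection: by the definition of $P_v(X,Y,Z)$ as the $t$-derivative of $\nabla^{v+tZ}_XY$, one has $(\partial^\nu(\nabla_YZ))_v(W)=P_v(Y,Z,W)$, and feeding in $W=\nabla^V_XV$ and $W=\nabla^V_YV$ gives precisely the tensor $P_V$ evaluated at $p$ as it appears in \eqref{RvRV}. Care must be taken with the index ordering in $P$, since $P(Y,Z,\cdot)$ records differentiation of $\nabla_Y Z$ in the direction of its last slot.

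For the coordinate formula \eqref{Firstcur2}, the plan is to expand each ingredient of \eqref{RvRV} in the natural coordinates \eqref{naturalcoord}, using $\nabla^v_{\partial_i}\partial_j=\Gamma^k_{\ ij}(v)(\partial_k)_{\pi(v)}$ and the coordinate expression \eqref{Pcoord} for $P$. The crucial computational point is that for the osculating affine connection $\nabla^V$, the effective Christoffel symbols are $\Gamma^k_{\ ij}(V(x))$, so differentiating them with respect to $x^m$ by the chain rule produces both the explicit $\partial\Gamma/\partial x^m$ term and the term $\frac{\partial\Gamma}{\partial y^h}\frac{\partial V^h}{\partial x^m}$; the latter is where the factor $\nabla^V_XV$, with coordinates involving $v^l\Gamma^h_{\ ml}(v)$, enters after the $P$-corrections absorb the derivative of $V$. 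I would organize this by writing out $R^V(X,Y)Z$ in coordinates as in the classical Riemannian case, then add the two $P_V$ terms from \eqref{Pcoord}; the $\frac{\partial V^h}{\partial x^m}$ contributions from the chain rule must cancel against matching contributions hidden in $\nabla^V_XV$, leaving only the $y$-derivative terms multiplied by $v^l\Gamma^h_{\ ml}(v)$ as written.

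The main obstacle I expect is bookkeeping: correctly tracking the slot in which $\partial^\nu$ acts when converting $\partial^\nu(\nabla_YZ)$ into $P$, and ensuring the chain-rule expansion of the $V$-dependent Christoffel symbols produces exactly the combination $\frac{\partial\Gamma^k_{\ ji}}{\partial x^m}(v)-v^l\Gamma^h_{\ ml}(v)\frac{\partial\Gamma^k_{\ ji}}{\partial y^h}(v)$ after the $\frac{\partial V^h}{\partial x^m}$ pieces cancel. A clean way to handle the cancellation is to work at a point $p$ and note that the $P_V$ corrections are constructed precisely so that only the directional dependence of $V$ through $v=V_p$ survives; choosing $V$ conveniently (for instance so that its first-order behavior is controlled at $p$) would simplify the verification, though since the left-hand side is independent of the extension the coordinate identity must hold for every admissible $V$. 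The anti-symmetry of $R_v$ in $X$ and $Y$, already noted after \eqref{Rv}, provides a useful consistency check on the final expression \eqref{Firstcur2}.
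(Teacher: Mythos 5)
Your proposal is correct and follows essentially the same route as the paper: it derives \eqref{RvRV} by applying \eqref{DY} to the two iterated derivatives and identifying $\partial^\nu(\nabla_YZ)$ with $P$, and it obtains \eqref{Firstcur2} by the chain rule on the $V$-dependent Christoffel symbols $\Gamma^k_{\,\,ij}\circ V$, with the $\partial V^l/\partial x^m$ terms cancelling against those produced by $\nabla^V_XV$ in the $P_V$-corrections. Both steps match the paper's argument, including the slot ordering in $P$ and the source of the $v^l\Gamma^h_{\,\,ml}(v)\,\partial\Gamma^k_{\,\,ji}/\partial y^h(v)$ terms.
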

\begin{proof}
In order to prove \eqref{RvRV}, it is enough to observe that using \eqref{DY}, we deduce that 
\begin{align*}
\nabla^v_X(\nabla_YZ)=(\nabla^V_X(\nabla^V_YZ))_{\pi(v)}-P_v(Y,Z,\nabla^V_XV),\\
\nabla^v_Y(\nabla_XZ)=(\nabla^V_Y(\nabla^V_XZ))_{\pi(v)}-P_v(X,Z,\nabla^V_YV).
\end{align*}
Let us now check \eqref{Firstcur2}.
Denote the Christoffel symbols of $\nabla^V$ as $\tilde{\Gamma}_{\,\,ij}^k(p)=\Gamma_{\,\,ij}^k(V_p)$. Then
\begin{multline}
 R^V(X,Y)Z=
 \left[Z^iY^jX^l \frac{\partial\tilde{\Gamma}_{\,\,ji}^k}{\partial x^l}
 -Z^iX^j Y^l\frac{\partial \tilde{\Gamma}_{\,\,ji}^k}{\partial x^l}\right.\\\left.
 +Z^iY^j X^m \left(\tilde{\Gamma}_{\,\,ji}^l \tilde{\Gamma}_{\,\,ml}^k-\tilde{\Gamma}_{\,\,mi}^l \tilde{\Gamma}_{\,\,jl}^k\right)\right]\frac{\partial}{\partial x^k}.\label{Firstcur}
 \end{multline}
 Moreover, as $(\nabla^V_XV)^k=X^m\frac{\partial V^k}{\partial x^m}+X^mV^l\Gamma_{\,\,ml}^k\circ V$, using \eqref{Pcoord}, we deduce that 
 \begin{align*}
 P_V(Y,Z,\nabla^V_XV)&=Z^iY^j(X^m\frac{\partial V^l}{\partial x^m}+X^mV^l\Gamma^h_{\,\, ml})\frac{\partial \Gamma^k_{\,\, ji}}{\partial y^h}\circ V,\\
 P_V(X,Z,\nabla^V_YV)&=Z^iX^j(Y^m\frac{\partial V^l}{\partial x^m}+Y^mV^l\Gamma^h_{\,\,ml})\frac{\partial \Gamma^k_{\,\, ji}}{\partial y^h}\circ V.
 \end{align*}
 Then, using the last equations, \eqref{Firstcur} and $\frac{\partial\tilde{\Gamma}_{\,\,ji}^k}{\partial x^m}(p)
 =\frac{\partial\Gamma_{\,\,ji}^k}{\partial x^m}(V_p)+\frac{\partial V^l}{\partial x^m}(p)\frac{\partial\Gamma_{\,\,ji}^k}{\partial y^l}(V_p)$, we finally obtain \eqref{Firstcur2}.
\end{proof}
\subsection{Covariant derivatives along curves}\label{covariant}
In the following, given a smooth curve $\gamma:[a,b]\rightarrow M$, $\mathfrak{X}(\gamma)$ will denote the space of smooth vector fields along $\gamma$ and ${\mathcal F}(I)$ the smooth real functions defined on $I=[a,b]$.
\begin{defi}\label{deficov}
An {\em $A$-anisotropic covariant derivation} $D^v_\gamma$ in $A$ along a curve $\gamma:[a,b]\rightarrow M$ is a map 
 \[D^v_\gamma: \mathfrak{X}(\gamma)\rightarrow T_{\pi(v)}M,\quad\quad X\mapsto D_\gamma^vX \]
for every $v\in A$ with $\pi(v)=\gamma(t_0)$, and $t_0\in [a,b]$, such that 
 \begin{enumerate}[(i)]
 \item $D^v_\gamma(X+Y)=D^v_\gamma X+D^v_\gamma Y$, $X,Y\in  {\mathfrak X}(\gamma)$,
\item $D_\gamma^v(fX)=\frac{df}{dt}(t_0) X(t_0)+f(t_0) D_\gamma^vX $ $\forall \,\,f\in {\mathcal F}(I)$, $X\in  {\mathfrak X}(\gamma)$,
\item $D_\gamma^VX(t):=D_\gamma^{V(t)}X$ is smooth  $\forall\,\, V,X\in   {\mathfrak X}(\gamma)$ and $V$, $A$-admissible, namely, $V(t)\in A\quad$ $\forall\,\, t\in [a,b]$.
 \end{enumerate}
 \end{defi}
 \begin{prop}
Given a smooth curve $\gamma:[a,b]\rightarrow M$,  an $A$-anisotropic connection $\nabla$ determines an induced $A$-anisotropic covariant derivative along $\gamma$ with the following property: if $X\in {\mathfrak X}(M)$, then $D_\gamma^v (X_{\gamma})=\nabla^v_{\dot\gamma} X$, where $X_\gamma$ is the vector field in ${\mathfrak X}(\gamma)$ defined as $X_\gamma(t)=X_{\gamma(t)}$ $\forall t\in [a,b]$.  
\end{prop}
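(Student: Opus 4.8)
The plan is to reproduce, in the anisotropic setting, the classical construction of the covariant derivative along a curve induced by an affine connection, exploiting the fact that for each fixed $v$ the osculating connection $\nabla^V$ reduces everything to the affine case. I would begin with uniqueness, since it simultaneously forces the defining formula and yields chart-independence for free. Fix $v\in A$ with $\pi(v)=\gamma(t_0)$ and a chart $(\Omega,\varphi)$ around $\gamma(t_0)$; any $W\in{\mathfrak X}(\gamma)$ can be written near $t_0$ as $W(t)=W^i(t)(\partial_i)_{\gamma(t)}$ with $W^i\in{\mathcal F}(I)$. After a standard localization argument using a bump function together with axiom (ii) of Definition~\ref{deficov}—which shows that $D^v_\gamma W$ depends only on the germ of $W$ at $t_0$—axioms (i) and (ii) and the required property $D^v_\gamma((\partial_j)_\gamma)=\nabla^v_{\dot\gamma}\partial_j$ force
\[D^v_\gamma W=\frac{dW^k}{dt}(t_0)\,(\partial_k)_{\pi(v)}+W^j(t_0)\,\nabla^v_{\dot\gamma}\partial_j.\]
Expanding $\nabla^v_{\dot\gamma}\partial_j$ by the $\mathcal F(M)$-linearity of Definition~\ref{aniconnection}(iv) and the definition of the Christoffel symbols gives the coordinate expression
\[D^v_\gamma W=\left(\frac{dW^k}{dt}(t_0)+W^j(t_0)\,\dot\gamma^i(t_0)\,\Gamma^k_{\,\,ij}(v)\right)(\partial_k)_{\pi(v)},\]
so $D^v_\gamma$ is uniquely determined wherever it exists.

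For existence I would choose any $A$-admissible vector field $V\in{\mathfrak X}(\Omega)$ with $V_{\gamma(t_0)}=v$ and pass to the osculating affine connection $\nabla^V$, which is an ordinary affine connection on $\Omega$ and hence induces a classical covariant derivative $D^V_\gamma$ along $\gamma$. I then set $D^v_\gamma W:=(D^V_\gamma W)(t_0)$. The crucial observation is that in coordinates $(D^V_\gamma W)(t_0)$ involves the Christoffel symbols $\tilde\Gamma^k_{\,\,ij}(\gamma(t_0))=\Gamma^k_{\,\,ij}(V_{\gamma(t_0)})=\Gamma^k_{\,\,ij}(v)$, so its value at $t_0$ depends on the extension $V$ only through $v$; this makes $D^v_\gamma$ well defined and recovers the formula above. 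Properties (i) and (ii) of Definition~\ref{deficov} are inherited directly from the classical operator $D^V_\gamma$, while the stated compatibility follows because, for $X\in{\mathfrak X}(M)$, one has $(D^V_\gamma(X_\gamma))(t_0)=(\nabla^V_{\dot\gamma}X)_{\gamma(t_0)}=\nabla^v_{\dot\gamma}X$. Chart-independence, and hence the gluing of these local operators into a globally defined $D_\gamma$, then follows immediately from the uniqueness established above, since on any overlap both local operators satisfy (i), (ii) and the compatibility property.

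The one axiom not immediate from this reduction is the smoothness condition (iii), because there the relevant direction $V(t)$ is an $A$-admissible vector field \emph{along} $\gamma$ that varies with $t$, so a single extension to $\Omega$ is awkward. Here I would argue directly from the coordinate formula: $D^{V(t)}_\gamma X=\big(\tfrac{dX^k}{dt}(t)+X^j(t)\,\dot\gamma^i(t)\,\Gamma^k_{\,\,ij}(V(t))\big)(\partial_k)_{\gamma(t)}$, which is smooth in $t$ since $\gamma$, the $X^i$ and $V$ are smooth and the Christoffel symbols $\Gamma^k_{\,\,ij}$ are smooth functions on $T\Omega\cap A$. I expect this smoothness verification, together with the localization needed to justify uniqueness, to be the only genuinely technical point; the anisotropic dependence on $v$ is otherwise inert, because once $v$ is fixed every step runs exactly as in the affine case.
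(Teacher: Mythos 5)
Your argument is correct and is precisely the route the paper intends: the paper's proof is a one-line citation to the classical existence-and-uniqueness argument for the induced covariant derivative (O'Neill, Prop.\ 3.18), and your write-up carries out exactly that argument in the anisotropic setting, with the only new ingredient being that the Christoffel symbols are evaluated at $v$ rather than at $\gamma(t_0)$. The localization, the coordinate formula, and the smoothness check via property (iii) of Definition~\ref{aniconnection} are all the standard steps, correctly adapted.
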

\begin{proof} Analogous to \cite[Prop. 3.18]{Oneill}. See also \cite[Prop. 18]{Ja19}.
\end{proof}
\begin{defi}
We say that a smooth curve $\gamma:[a,b]\rightarrow M$ is $A$-admissible if $\dot\gamma(t)\in A$ for all $t\in [a,b]$. Moreover, we say that an $A$-admissible smooth curve is an {\em autoparallel curve} of the $A$-anisotropic connection $\nabla$ if $D^{\dot\gamma}_\gamma \dot\gamma=0$, where $D_\gamma$ is the $A$-anisotropic covariant derivative associated with $\nabla$.
\end{defi}
In coordinates, autoparallel curves are given by the equation
\begin{equation}
\ddot \gamma^k+\dot\gamma^i\dot\gamma^j \Gamma^{k}_{\, ij}(\dot\gamma)=0.
\end{equation}
%

 We say that a {\em  two-parameter map}  is a smooth map
$\Lambda:{\mathcal O}\rightarrow M$ such that  ${\mathcal O}$ is an open subset of $\R^2$ satisfying the interval condition, namely, horizontal and vertical lines of $\R^2$ intersect $\mathcal O$ on intervals. We will use the following notation:
\begin{enumerate}
\item the $t$-parameter curve of $\Lambda$
at $s_0$ is the curve $\gamma_{s_0}$ defined as $t\rightarrow \gamma_{s_0}(t)=\Lambda(t,s_0)$,
\item the $s$-parameter curve of $\Lambda$ at $t_0$ is the curve $\beta_{t_0}$ defined as $s\rightarrow \beta_{t_0}(s)=\Lambda(t_0,s)$.
\end{enumerate}
 Let us define $\Lambda^*TM$ as the pull-back vector bundle over ${\mathcal O}$ induced by lifting $\pi:TM\rightarrow M$ through $\Lambda$. Then we denote the subset of smooth sections 
of $\Lambda^*TM$ as ${\mathfrak X}(\Lambda)$:
\begin{displaymath}
    \xymatrix{ \Lambda^*TM \ar[d]_{\Lambda^*} & TM \ar[d]^{\pi} \\
               {\mathcal O} \ar[r]^{\Lambda}  & M  }
\end{displaymath}
 Observe that a vector field $V\in {\mathfrak X}(\Lambda)$ induces vector fields in $\mathfrak X(\gamma_{s_0})$ and $\mathfrak X(\beta_{t_0})$.
We can also define the {\it   curvature operator}  associated with an $A$-admissible two-parameter map $\Lambda: [a,b]\times (-\varepsilon,\varepsilon)\rightarrow M$, $(t,s)\rightarrow \Lambda(t,s)$.  Here $A$-admissible means that $\dot\gamma_s(t)\in A$ for every $(t,s)\in [a,b]\times (-\varepsilon,\varepsilon)$. The curvature operator of $\Lambda$ is a map
$R_\Lambda: {\mathfrak X}(\Lambda)\rightarrow {\mathfrak X}(\Lambda)$ defined, for
 any vector field $W\in {\mathfrak X}(\Lambda)$, as
 \[R_\Lambda(W):=D_{\gamma_{s}}^{\dot\gamma_s}D_{\beta_{t}}^{\dot\gamma_s}W-D_{\beta_{t}}^{\dot\gamma_s}D_{\gamma_{s}}^{\dot\gamma_s}W-P_{\dot\gamma_s}(\dot\beta_t,W,D_{\gamma_s}^{\dot\gamma_s}\dot\gamma_s)+
 P_{\dot\gamma_s}(\dot\gamma_s,W,D_{\beta_t}^{\dot\gamma_s}\dot\gamma_s).\]
%

\begin{prop}
	Given a two-parameter map, and an $A$-anisotropic connection in a manifold $M$, with $R_\Lambda$ the curvature operator of it induced covariant derivative, it holds
\begin{equation}\label{RLambda}
R_\Lambda (W)=R_{\dot\gamma_s}(\dot\gamma_s,\dot\beta_t)W,
\end{equation}
where $R_{\dot\gamma_s}$ is the curvature tensor of $\nabla$. 
\end{prop}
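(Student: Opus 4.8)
The plan is to prove \eqref{RLambda} by a direct computation in a natural coordinate system, exactly parallel to the proof of Proposition \ref{curv}; indeed \eqref{RLambda} is just the incarnation of \eqref{RvRV} along the two-parameter map, and the role of the two $P$-terms in the definition of $R_\Lambda$ is precisely to absorb the second derivatives of $\Lambda$. First I would fix coordinates $(\Omega,\varphi)$ and write $\Lambda^k$ for the components of $\Lambda$, so that $\dot\gamma_s$ has components $\Lambda^k_t=\partial_t\Lambda^k$, the field $\dot\beta_t$ has components $\Lambda^k_s=\partial_s\Lambda^k$, and $W=W^k\partial_k$. From the defining properties of the induced covariant derivative, in particular $D^v_\gamma(X_\gamma)=\nabla^v_{\dot\gamma}X$, one obtains for any admissible direction $v$ and any field $X$ along a curve the coordinate formula $(D^v_\gamma X)^k=\dot X^k+\Gamma^k_{\,ij}(v)\dot\gamma^i X^j$. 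Applying this with the \emph{varying} direction $v=\dot\gamma_s=\Lambda_t$ yields closed expressions for $D^{\dot\gamma_s}_{\gamma_s}W$ and $D^{\dot\gamma_s}_{\beta_t}W$.

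Next I would compute the two iterated derivatives $D^{\dot\gamma_s}_{\gamma_s}D^{\dot\gamma_s}_{\beta_t}W$ and $D^{\dot\gamma_s}_{\beta_t}D^{\dot\gamma_s}_{\gamma_s}W$ and subtract. As in the classical case the pure second derivatives $\partial_t\partial_s W^k$ cancel, and the remaining cross terms containing $\Lambda^k_{ts}=\Lambda^k_{st}$ together with the first-order terms of the form $\Gamma\,\partial W$ cancel by the symmetry of mixed partials and a relabeling of summation indices. What survives is, first, the terms with $\partial\Gamma^k_{\,ij}/\partial x^m$ and the quadratic terms $\Gamma\,\Gamma$, which are exactly the first and third groups of terms in \eqref{Firstcur2}, and, second, a genuinely anisotropic contribution coming from differentiating the direction $\Lambda_t$ sitting inside $\Gamma^k_{\,ij}(\Lambda_t)$, namely terms of the form $\frac{\partial\Gamma^k_{\,ij}}{\partial y^h}\Lambda^h_{tt}$ and $\frac{\partial\Gamma^k_{\,ij}}{\partial y^h}\Lambda^h_{ts}$.

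Finally I would evaluate the two correction terms using \eqref{Pcoord}, together with the coordinate expressions $(D^{\dot\gamma_s}_{\gamma_s}\dot\gamma_s)^k=\Lambda^k_{tt}+\Gamma^k_{\,ab}(\Lambda_t)\Lambda^a_t\Lambda^b_t$ and $(D^{\dot\gamma_s}_{\beta_t}\dot\gamma_s)^k=\Lambda^k_{ts}+\Gamma^k_{\,ab}(\Lambda_t)\Lambda^a_s\Lambda^b_t$. The key point is that the $\Lambda_{tt}$- and $\Lambda_{ts}$-pieces of these two $P$-terms cancel exactly the surviving second-derivative terms $\frac{\partial\Gamma}{\partial y^h}\Lambda^h_{tt}$ and $\frac{\partial\Gamma}{\partial y^h}\Lambda^h_{ts}$ from the previous step, while their remaining $\Gamma\,\partial\Gamma/\partial y$-pieces reproduce precisely the terms $-v^l\Gamma^h_{\,ml}\frac{\partial\Gamma^k_{\,ji}}{\partial y^h}$ in \eqref{Firstcur2} with $v=\Lambda_t$. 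Matching the three resulting groups of terms against \eqref{Firstcur2}, evaluated with $v=X=\dot\gamma_s$, $Y=\dot\beta_t$ and $Z=W$, gives $R_\Lambda(W)=R_{\dot\gamma_s}(\dot\gamma_s,\dot\beta_t)W$. I expect the main obstacle to be exactly this cancellation of the second derivatives of $\Lambda$: it is what forces the two $P$-terms into the definition of $R_\Lambda$, and verifying it requires keeping careful track of which factor each index of $\Gamma$ contracts. The same identity can alternatively be obtained more conceptually by choosing an admissible extension $V$ of $\dot\gamma_s$, using the classical two-parameter curvature formula for the affine connection $\nabla^V$ to identify $(R^V)(\dot\gamma_s,\dot\beta_t)W$ with the commutator $\tfrac{D^V}{\partial t}\tfrac{D^V}{\partial s}W-\tfrac{D^V}{\partial s}\tfrac{D^V}{\partial t}W$, and then invoking \eqref{DY} and \eqref{RvRV} to account for the $P$-corrections; but the need to control the extension $V$ along both coordinate directions of $\Lambda$ makes the coordinate computation the safer route.
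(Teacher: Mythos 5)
Your proposal is correct and follows essentially the same route as the paper: a direct computation in natural coordinates, using the chain rule on $\Gamma^k_{\,ij}\circ\dot\gamma_s$ so that the $\partial\Gamma/\partial y$-terms produced by the second derivatives of $\Lambda$ are absorbed by the two $P$-corrections, and then matching the result against \eqref{Firstcur2}. The only (inessential) difference is that the paper first uses the ${\mathcal F}(I)$-linearity of $R_\Lambda$ in $W$ to reduce to $W=\partial_i$, which spares the bookkeeping of the $\partial_t\partial_sW^k$ and $\Gamma\,\partial W$ cancellations that you carry out by hand.
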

\begin{proof}
	First observe that by a straightforward computation, one can check that $R_\Lambda$ is ${\mathcal F}(I)$-multilinear on $W$, namely, given $f\in {\mathcal F}(I)$, $R_\Lambda(fW)=f R_\Lambda(W)$. Then in order  to check \eqref{RLambda} is enough to prove that $R_\Lambda (\left(\partial_i\right)_{\gamma_s})=R_{\dot\gamma_s}(\dot\gamma_s,\dot\beta_t)\partial_i$ for any partial vector field $\partial_i$. This is also straightforward taking into account that 
	\begin{align*}
	\frac{d}{dt}\left(\Gamma^k_{\,\, ij}\circ \frac{d\Lambda}{dt}\right)=\frac{d\Lambda^m}{dt}\frac{\partial \Gamma^k_{\,\, ij}}{\partial x^m}\circ \frac{d\Lambda}{dt}+ \frac{d^2\Lambda^m}{dt^2}\frac{\partial \Gamma^k_{\,\, ij}}{\partial y^m}\circ \frac{d\Lambda}{dt},\\
	\frac{d}{dt}\left(\Gamma^k_{\,\, ij}\circ \frac{d\Lambda}{dt}\right)=\frac{d\Lambda^m}{ds}\frac{\partial \Gamma^k_{\,\, ij}}{\partial x^m}\circ \frac{d\Lambda}{dt}+ \frac{d^2\Lambda^m}{dsdt}\frac{\partial \Gamma^k_{\,\, ij}}{\partial y^m}\circ\frac{d\Lambda}{dt},
	\end{align*}
parts $(i)$ and $(ii)$ of Def. \ref{deficov} and \eqref{Firstcur2}.
	\end{proof}
\begin{defi}
Given an auto-parallel curve $\gamma$ of an $A$-anisotropic connection $\nabla$, we say that a vector field $J$ along $\gamma$ is a {\em Jacobi field} if it is the variational vector field of a variation of $\gamma$  such that the longitudinal curves (namely, in the notation above, the curves $\gamma_s$) are auto-parallel curves.
\end{defi}
\begin{prop}\label{Jacoboper}
Let $\nabla$ be an $A$-anisotropic connection in $A\subset TM\setminus 0$, being $\mathcal T$, $P$ and $R$, respectively, the torsion, the vertical derivative and the curvature tensor of $\nabla$. If $\gamma:[a,b]\rightarrow M$ is an auto-parallel curve of $\nabla$, $D_\gamma$, the induced covariant derivative along $\gamma$ and $J$, a Jacobi field along $\gamma$, then
\begin{equation}\label{Jacobieq1}
(D^{\dot\gamma}_{\gamma})^2J=R_{\dot\gamma}(\dot\gamma,J)\dot\gamma-
P_{\dot\gamma}(\dot\gamma,\dot\gamma,D^{\dot\gamma}_{\gamma}J+{\mathcal T}_{\dot\gamma}(J,\dot\gamma))-(\nabla_{\dot\gamma}{\mathcal T})_{\dot\gamma}(J,\dot\gamma)-{\mathcal T}_{\dot\gamma}(D^{\dot\gamma}_{\gamma}J,\dot\gamma).
\end{equation}
In particular, if $\nabla$ is torsion-free and
\begin{equation}\label{vertprop}
\text{$P_v(v,v,u)=0\,\,$  $\forall v\in A$ and $u\in T_{\pi(v)}M$},
\end{equation}
 then 
\begin{equation}\label{Jacobieq}
(D^{\dot\gamma}_\gamma)^2J=R_{\dot\gamma}(\dot\gamma,J)\dot\gamma.
\end{equation}
\end{prop}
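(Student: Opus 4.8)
The plan is to specialize the curvature--operator identity \eqref{RLambda} to the variation defining $J$ and then unwind the covariant derivatives using the auto-parallel hypothesis. Let $\Lambda(t,s)$ be a variation of $\gamma$ through auto-parallel curves with variational field $J$, and write $\dot\gamma_s=\partial_t\Lambda$ and $\dot\beta_t=\partial_s\Lambda$, so that at $s=0$ one has $\dot\gamma_s=\dot\gamma$ and $\dot\beta_t=J$. First I would evaluate \eqref{RLambda} on $W=\dot\gamma_s$, obtaining
\[R_{\dot\gamma_s}(\dot\gamma_s,\dot\beta_t)\dot\gamma_s=D_{\gamma_s}^{\dot\gamma_s}D_{\beta_t}^{\dot\gamma_s}\dot\gamma_s-D_{\beta_t}^{\dot\gamma_s}D_{\gamma_s}^{\dot\gamma_s}\dot\gamma_s-P_{\dot\gamma_s}(\dot\beta_t,\dot\gamma_s,D_{\gamma_s}^{\dot\gamma_s}\dot\gamma_s)+P_{\dot\gamma_s}(\dot\gamma_s,\dot\gamma_s,D_{\beta_t}^{\dot\gamma_s}\dot\gamma_s).\]
Since every longitudinal curve is auto-parallel, $D_{\gamma_s}^{\dot\gamma_s}\dot\gamma_s=0$, which annihilates the second and third terms on the right, leaving
\[R_{\dot\gamma_s}(\dot\gamma_s,\dot\beta_t)\dot\gamma_s=D_{\gamma_s}^{\dot\gamma_s}D_{\beta_t}^{\dot\gamma_s}\dot\gamma_s+P_{\dot\gamma_s}(\dot\gamma_s,\dot\gamma_s,D_{\beta_t}^{\dot\gamma_s}\dot\gamma_s).\]

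The second step is a symmetry lemma adapted to the possibly nonzero torsion. Writing both $D_{\beta_t}^{\dot\gamma_s}\dot\gamma_s$ and $D_{\gamma_s}^{\dot\gamma_s}\dot\beta_t$ in a natural coordinate system and comparing the Christoffel contributions of the mixed partial $\partial_s\partial_t\Lambda$, the part symmetric in the two differentiation directions cancels and one is left precisely with the antisymmetrized Christoffel symbols, i.e.
\[D_{\beta_t}^{\dot\gamma_s}\dot\gamma_s=D_{\gamma_s}^{\dot\gamma_s}\dot\beta_t+\mathcal{T}_{\dot\gamma_s}(\dot\beta_t,\dot\gamma_s),\]
with $\mathcal{T}$ the torsion \eqref{torsion}. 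Substituting this into the previous identity and evaluating at $s=0$ gives
\[R_{\dot\gamma}(\dot\gamma,J)\dot\gamma=D_\gamma^{\dot\gamma}\left(D_\gamma^{\dot\gamma}J+\mathcal{T}_{\dot\gamma}(J,\dot\gamma)\right)+P_{\dot\gamma}\left(\dot\gamma,\dot\gamma,D_\gamma^{\dot\gamma}J+\mathcal{T}_{\dot\gamma}(J,\dot\gamma)\right).\]

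The third step expands $D_\gamma^{\dot\gamma}\!\left(\mathcal{T}_{\dot\gamma}(J,\dot\gamma)\right)$ through the Leibniz rule for the covariant derivative along $\gamma$ of the anisotropic tensor $\mathcal{T}$. The delicate point---and the one I expect to be the main obstacle---is that, because $\mathcal{T}$ depends on direction, differentiating it along $\gamma$ a priori produces a vertical correction of the form $(\partial^\nu\mathcal{T})_{\dot\gamma}(\,\cdot\,,\,\cdot\,,D_\gamma^{\dot\gamma}\dot\gamma)$, in analogy with the corrections in \eqref{DY} and \eqref{partialT}; this is exactly what separates $(\nabla_{\dot\gamma}\mathcal{T})_{\dot\gamma}$ from a naive $t$-derivative. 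The key observation is that this correction is contracted against the acceleration $D_\gamma^{\dot\gamma}\dot\gamma$, which vanishes because $\gamma$ is auto-parallel; the last argument of the Leibniz rule, $\mathcal{T}_{\dot\gamma}(J,D_\gamma^{\dot\gamma}\dot\gamma)$, drops out for the same reason. Hence the expansion collapses to
\[D_\gamma^{\dot\gamma}\!\left(\mathcal{T}_{\dot\gamma}(J,\dot\gamma)\right)=(\nabla_{\dot\gamma}\mathcal{T})_{\dot\gamma}(J,\dot\gamma)+\mathcal{T}_{\dot\gamma}(D_\gamma^{\dot\gamma}J,\dot\gamma).\]
I would verify this collapse directly in a natural coordinate system, using the auto-parallel equation $\ddot\gamma^m=-\dot\gamma^a\dot\gamma^b\Gamma^m_{\,\,ab}(\dot\gamma)$ to eliminate the second derivatives of $\gamma$, and matching the result against the coordinate expression of $(\nabla_{\dot\gamma}\mathcal{T})_{\dot\gamma}$, whose horizontal part carries the delta-derivative $\partial_{x^m}-\dot\gamma^l\Gamma^h_{\,\,ml}\partial_{y^h}$ already appearing in \eqref{Firstcur2}.

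Substituting this expansion into the identity of the second step and solving for $(D_\gamma^{\dot\gamma})^2J$ yields \eqref{Jacobieq1}. Finally, when $\nabla$ is torsion-free every $\mathcal{T}$-term vanishes and \eqref{Jacobieq1} reduces to $(D_\gamma^{\dot\gamma})^2J=R_{\dot\gamma}(\dot\gamma,J)\dot\gamma-P_{\dot\gamma}(\dot\gamma,\dot\gamma,D_\gamma^{\dot\gamma}J)$; the hypothesis \eqref{vertprop} gives $P_{\dot\gamma}(\dot\gamma,\dot\gamma,D_\gamma^{\dot\gamma}J)=0$, producing \eqref{Jacobieq}.
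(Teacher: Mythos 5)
Your argument is correct and follows essentially the same route as the paper: evaluate the curvature operator identity \eqref{RLambda} on $W=\dot\gamma_s$, use the auto-parallel condition to kill the terms contracted against $D_{\gamma_s}^{\dot\gamma_s}\dot\gamma_s$, swap $D_{\beta_t}^{\dot\gamma_s}\dot\gamma_s$ for $D_{\gamma_s}^{\dot\gamma_s}\dot\beta_t$ via the torsion, and expand $D_\gamma^{\dot\gamma}(\mathcal T_{\dot\gamma}(J,\dot\gamma))$ by the Leibniz rule, noting that the vertical correction $(\partial^\nu\mathcal T)(\cdot,\cdot,D_\gamma^{\dot\gamma}\dot\gamma)$ vanishes. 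The paper's \eqref{nabla1}--\eqref{nabla3} are exactly these three steps, so no further comparison is needed.
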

\begin{proof}
Consider a variation $\Lambda:[a,b]\times (-\varepsilon,\varepsilon)\rightarrow M$ of $\gamma$ (with the above notation) in such a way that $\gamma_s$ is an auto-parallel curve for every $s\in  (-\varepsilon,\varepsilon)$
and $\dot\beta_t(0)=J(t)$. Then $D^{\dot\gamma_s}_{\gamma_s}\dot\gamma_s=0$ and from the definition of $R_\Lambda$ and \eqref{RLambda}, we get
\begin{equation}\label{nabla1}
0=D^{\dot\gamma_s}_{\beta_t} D^{\dot\gamma_s}_{\gamma_s}\dot\gamma_s= -R_{\dot\gamma_s}(\dot\gamma_s,\dot\beta_t)\dot\gamma_s
+D^{\dot\gamma_s}_{\gamma_s}D^{\dot\gamma_s}_{\beta_t}\dot\gamma_s
+P_{\dot\gamma}(\dot\gamma,\dot\gamma,D^{\gamma_s}_{\beta_t}\dot\gamma_s).
\end{equation}
Moreover, taking into account the definition of the torsion $\mathcal T$,
we get that $D^{\dot\gamma_s}_{\beta_t}\dot\gamma_s=
D^{\dot\gamma_s}_{\gamma_s}\dot\beta_t+{\mathcal T}_{\dot\gamma_s}(\dot\beta_t,\dot\gamma_s)$ and then
\begin{equation}\label{nabla2}
D^{\dot\gamma_s}_{\gamma_s}D^{\dot\gamma_s}_{\beta_t}\dot\gamma_s=
(D^{\dot\gamma_s}_{\gamma_s})^2\dot\beta_t+
D^{\dot\gamma_s}_{\gamma_s}({\mathcal T}_{\dot\gamma_s}(\dot\beta_t,\dot\gamma_s)).
\end{equation}
Furthermore,
\begin{equation}\label{nabla3}
D^{\dot\gamma_s}_{\gamma_s}({\mathcal T}_{\dot\gamma_s}(\dot\beta_t,\dot\gamma_s))=(\nabla_{\dot\gamma_s}{\mathcal T})_{\dot\gamma_s}(\dot\beta_t,\dot\gamma_s)+{\mathcal T}_{\dot\gamma_s}(D^{\dot\gamma_s}_{\gamma_s}\dot\beta_t,\dot\gamma_s)
+({\partial^\nu}{\mathcal T})_{\dot\gamma_s}(\dot\beta_t,\dot\gamma_s,D^{\dot\gamma_s}_{\dot\gamma_s}\dot\gamma_s)
\end{equation}
(recall \eqref{productrule} and \eqref{partialT}).
Putting together \eqref{nabla1}-\eqref{nabla3}, evaluating in $s=0$ and taking into account that $\gamma_s$ is an auto-parallel curve, we easily conclude \eqref{Jacobieq1}.
\end{proof}
\begin{defi}\label{def:curvoper}
Let $\nabla$ be an $A$-anisotropic connection in $A\subset TM\setminus 0$ 
 and $\gamma:[a,b]\rightarrow M$ an auto-parallel curve of $\nabla$. 
 We say that the map 
\begin{equation}
R_\gamma:\mathfrak{X}(\gamma)\rightarrow \mathfrak{X}(\gamma), \quad
U\rightarrow R_\gamma(U):= R_{\dot\gamma}(\dot\gamma,U)\dot\gamma
\end{equation}
 is {\it the curvature operator of $\gamma$}.
\end{defi}
\subsection{Parallel Transport}\label{parallel} Given an $A$-anisotropic connection $\nabla$, there are several ways to transport a vector field along a curve $\gamma:[a,b]\rightarrow M$ considering the covariant derivative $D_\gamma$ associated with $\nabla$:
\begin{enumerate}[(i)]
\item The parallel transport  defined by $D_\gamma^XX=0$. If the subset $A$ does not coincide with $TM\setminus 0$, this parallel transport could not be defined along the whole curve, but at least it is defined in an interval of $a$.
\item The $\gamma$-parallel transport defined by $D_\gamma^{\dot\gamma}X=0$, which is always defined along $\gamma$ whenever $\gamma$ is $A$-admissible, namely, $\dot\gamma(t)\in A$ for every $t\in [a,b]$.
\item The $W$-parallel transport defined by $D_\gamma^{W}X=0$, which is always defined along $\gamma$ whenever $W$ is $A$-admissible.
\end{enumerate}
Observe that in order to prove that both $\gamma$-parallel and $W$-parallel transports are always defined along the whole curve $\gamma$, it is enough to apply standard ODE Theory to  the equations
\begin{equation}
\dot X^i+\Gamma_{\,\, jk}^i(W) \dot\gamma^j X^k=0, \quad 
\dot X^i+\Gamma_{\,\, jk}^i(\dot\gamma) \dot\gamma^j X^k=0,
\end{equation}
with $i=1,\ldots, n$.
Instead, the parallel transport is not necessarily defined in the whole domain of $\gamma$, but at least it is defined in some subinterval, as this time the equations
\begin{equation}
\dot X^i+\Gamma_{\,\, jk}^i(X) \dot\gamma^j X^k=0,
\end{equation}
with $i=1,\ldots, n$, are not linear.

Recall that one can  compute the curvature tensor or the derivation of any tensor with an $A$-anisotropic connection in terms of an affine connection $\nabla^V$ using an arbitrary $A$-admissible extension $V$ of $v$ (see \eqref{partialT} and  \eqref{RvRV}). Let us show that one can always choose a suitable $V\in {\mathfrak X}(\Omega)$ (in some open subset $\Omega\subset M$) to simplify computations.
\begin{prop}\label{assump}
Given an $A$-anisotropic connection $\nabla$ and a vector $v\in A$, we can always choose an $A$-admissible extension $V$ defined in an open subset $\Omega\subset M$, such that  
 \begin{equation}\label{magiccond}
\nabla^v_{X}V=0
\end{equation}
for any vector field $X\in {\mathfrak X}(\Omega)$. Furthermore, if
$T\in  \mathfrak{T}^r_s(M,A)$ and $X\in \mathfrak{T}^1_0(M,A)$, then $(\nabla_X T)_v=(\nabla^V_X(T_V))_{\pi(v)}$, and the curvature tensor of $\nabla$ can be computed as
\begin{equation}\label{curvaturecomp}
 R_v(X,Y)Z=R^V_{\pi(v)}(X,Y)Z=(\nabla^V_{X}\nabla^V_{Y}Z)_{\pi(v)}-
 (\nabla^V_{Y}\nabla^V_{X}Z))_{\pi(v)},
 \end{equation}
 for $X,Y,Z\in  {\mathfrak X}(\Omega)$ such that $[X,Y]=0$ (the last condition is not necessary for the first identity), and its derivative as
 \begin{multline}\label{dercurvaturecomp}
 (\nabla_XR)_v(Y,Z)W=(\nabla^V_XR^V)_{\pi(v)}(Y,Z)W-P_v(Z,W,\nabla_X^V\nabla^V_YV)\\+P_v(Y,W,\nabla_X^V\nabla^V_ZV)
 \end{multline}
 assuming that all the  Lie brackets of the vector fields $X,Y,Z,W\in  {\mathfrak X}(\Omega)$ are zero.
\end{prop}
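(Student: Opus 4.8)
The plan is to build the extension $V$ by prescribing its value and its first-order behaviour at $p=\pi(v)$ inside a single coordinate chart, and then to read every remaining assertion off the transfer formulas \eqref{partialT} and \eqref{RvRV} once the correction terms $\nabla^V_\cdot V$ have been annihilated at $p$. This is the exact analogue of passing, in the classical case, to coordinates in which the relevant Christoffel combination is normalised at one point.

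For \eqref{magiccond} I would fix a chart $(\Omega_0,\varphi)$ with $\varphi(p)=0$ and set $V^k(x)=v^k-\Gamma^k_{\,\,il}(v)\,v^l\,x^i$ in these coordinates. Then $V_p=v$ and $\partial V^k/\partial x^i(p)=-\Gamma^k_{\,\,il}(v)v^l$, so the coordinate expression $(\nabla^v_XV)^k=X^i(p)\big(\partial V^k/\partial x^i(p)+v^l\Gamma^k_{\,\,il}(v)\big)$ vanishes for every $X$; shrinking $\Omega_0$ to a neighbourhood $\Omega$ on which $V$ stays in the open set $A$ (possible by continuity, since $V_p=v\in A$) makes $V$ an $A$-admissible extension.

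The two middle claims are then immediate. With this $V$ the term $(\partial^\nu T)_v(\ldots,\nabla^V_XV)$ in \eqref{partialT} drops out, so $\nabla_X\big(T(X_1,\ldots,X_s)\big)(v)=X_p\big(T_V(X_1,\ldots,X_s)\big)$; comparing \eqref{productrule} term by term with the ordinary Leibniz rule for $\nabla^V$ acting on $T_V$, and using that by \eqref{dertheta} and the definition of $\nabla^V$ one has $(\nabla_X\theta)_v=(\nabla^V_X\theta)_p$ and $(\nabla_XX_j)_v=(\nabla^V_XX_j)_p$, yields $(\nabla_XT)_v=(\nabla^V_X(T_V))_p$. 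Feeding $\nabla^V_XV|_p=\nabla^V_YV|_p=0$ into \eqref{RvRV} kills the two $P_V$ terms and gives $R_v(X,Y)Z=(R^V)_p(X,Y)Z$ with no hypothesis on $[X,Y]$; when $[X,Y]=0$ the bracket term in \eqref{Rv} disappears and \eqref{curvaturecomp} follows.

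The real work is \eqref{dercurvaturecomp}. Applying the transfer identity of the previous step to the curvature tensor (of type \eqref{forthecurv}) gives $(\nabla_XR)_v=(\nabla^V_X R_V)_p$, where $R_V$ is the ordinary $(1,3)$-field $q\mapsto R_{V_q}$. The subtlety is that $R_V$ and the affine curvature $R^V$ do not agree away from $p$: evaluating \eqref{RvRV} at a generic point with the same $V$ gives, as fields on $\Omega$,
\[R_V(Y,Z)W=R^V(Y,Z)W-P_V(Z,W,\nabla^V_YV)+P_V(Y,W,\nabla^V_ZV).\]
I would now apply $\nabla^V_X$ to this identity and evaluate at $p$. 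Since $\nabla^V_YV$ and $\nabla^V_ZV$ vanish at $p$, every term produced by the Leibniz rule still carrying one of these factors dies, and the only survivors among the $P_V$ contributions are $-P_v(Z,W,\nabla^V_X\nabla^V_YV)$ and $+P_v(Y,W,\nabla^V_X\nabla^V_ZV)$, because the second derivatives of $V$ need not vanish at $p$. The remaining Leibniz terms coming from $R^V(\nabla^V_XY,Z)W$ and its companions match exactly the Leibniz terms hidden inside $(\nabla^V_XR_V)(Y,Z)W$, using $R_V|_p=R^V|_p$, and cancel, leaving precisely \eqref{dercurvaturecomp}. The main obstacle is exactly this bookkeeping: recognising that the single surviving obstruction to ``$V$ behaving like a parallel field'' is the non-vanishing of $\nabla^V_X\nabla^V_YV$ at $p$, which is what manufactures the two residual $P$ terms; the hypothesis that all Lie brackets vanish is what lets me suppress the bracket contributions and treat $R^V$ and its derivative as iterated covariant derivatives, exactly as in the classical computation.
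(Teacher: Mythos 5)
Your proof is correct, and for everything after the construction of $V$ it follows the same route as the paper: use \eqref{magiccond} to kill the $\partial^\nu$-correction in \eqref{partialT} and match \eqref{productrule} with the Leibniz rule for $\nabla^V$ to get $(\nabla_XT)_v=(\nabla^V_X(T_V))_{\pi(v)}$; feed $\nabla^V_XV|_p=\nabla^V_YV|_p=0$ into \eqref{RvRV} for \eqref{curvaturecomp}; and for \eqref{dercurvaturecomp} apply the transfer identity to $T=R$, rewrite $R_V$ in terms of $R^V$ via \eqref{RvRV} as fields on $\Omega$, and observe that after applying $\nabla^V_X$ and evaluating at $p$ the only surviving $P$-contributions are those where the derivative lands on $\nabla^V_YV$ and $\nabla^V_ZV$, which is exactly the paper's key observation $\bigl(\nabla^V_X(P_V(Z,W,\nabla^V_YV))\bigr)_{\pi(v)}=P_v(Z,W,\nabla^V_X\nabla^V_YV)$.

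The one place you genuinely diverge is the construction of the extension itself. The paper builds $V$ by iterated parallel transport ($D^V_\gamma V=0$) along the coordinate lines of a chart whose image is a product of intervals, shrinking $\Omega$ because this nonlinear ODE need not be solvable on the whole interval; this yields $\nabla^V_{\partial_i}V=0$ along entire slices, though only the value at $p$ is ever used. You instead prescribe $V^k(x)=v^k-\Gamma^k_{\,\,il}(v)v^lx^i$ in a chart centred at $p$, which is an explicit first-order Taylor normalisation: it is manifestly smooth, avoids any ODE solvability or smooth-dependence issues, and achieves $(\nabla^V_XV)_p=0$ by direct cancellation in coordinates, at the cost of controlling $\nabla^V V$ only at the single point $p$. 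Since the proposition only asserts and only needs the condition at $p$, your construction is a perfectly valid (and arguably more elementary) substitute.
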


\begin{proof}
 In order to find the extension,  choose a chart $(\Omega,\varphi)$ in such a way that $\varphi(\Omega)$ is a product of open intervals. Then extend $v$ along the integral curves of the chart using the parallel transport given by $D_\gamma^VV=0$, namely, if $\varphi(p)=(x^1,\ldots,x^n)$, first extend $v$ to a parallel vector field along the curve $(a_1,b_1)\ni t\rightarrow \varphi^{-1}(t,x^2,\ldots,x^n)$, then to a parallel vector field along $(a_2,b_2)\ni t\rightarrow \varphi^{-1}(s,t,x^3,\ldots,x^n)$, for every $s\in (a_1,b_1)$ and so on, obtaining a vector field in $\Omega$. Observe that as the parallel transport is not defined in all the interval, we may need to reduce $\Omega$. The identity for $\nabla_XT$ follows directly from \eqref{productrule}, \eqref{partialT} and \eqref{magiccond}.  The identity \eqref{curvaturecomp} follows from \eqref{RvRV} and \eqref{magiccond}, and for \eqref{dercurvaturecomp}, use the identity  $(\nabla_X T)_v=(\nabla^V_X(T_V))_{\pi(v)}$ for $T=R$ and observe that 
 \[\left(\nabla^V_X(P_V(Z,W,\nabla^V_YV))\right)_{\pi(v)}=P_v(Z,W,\nabla_X^V\nabla^V_YV)\]
  as a consequence of \eqref{magiccond}.
\end{proof}
Observe that with the choice of $V$ in \eqref{magiccond}, one has that $(\nabla^V_{X}V)_p=0$, where $p=\pi(v)$, but the vector field $\nabla^V_{X}V$ could not be identically zero away from $p$.
\subsection{Bianchi identities}\label{bianchiid} Let us generalize Bianchi identities to arbitrary anisotropic connections.
\begin{prop}
Let $\nabla$ be an $A$-anisotropic connection and $P$, $\mathcal T$ and $R$ its vertical derivative, and torsion  and curvature tensors, respectively. For every $v\in A$ and $u,w,z\in T_{\pi(v)}M$, we have that $R_v(u,w)=-R_v(w,u)$ and $R$ satisfies the first Bianchi identity:
\[\sum_{cyc: u,w,z} R_v(u,w)z=\sum_{cyc:u,w,z} ({\mathcal T}_v({\mathcal T}_v(u,w),z)+(\nabla_u{\mathcal T})_v(w,z)),\]
and the second Bianchi identity:
\[\sum_{cyc:u,w,z} \big((\nabla_uR)_v(w,z)b-P_v(w,b,R_v(u,z)v)+R_v({\mathcal T}_v(u,w),z)b\big)=0.\]
Here $\sum_{cyc: u,w,z}$ denotes the cyclic sum in $u,w,z$.
\end{prop}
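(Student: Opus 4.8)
The key insight I would exploit is Proposition \ref{assump}: given any $v \in A$, there exists an $A$-admissible extension $V$ satisfying $\nabla^v_X V = 0$ for all $X$, and with respect to this extension both the curvature tensor and its anisotropic derivative reduce to the corresponding objects for the ordinary affine connection $\nabla^V$. Let me check what this buys us for each identity.

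---

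The plan is to reduce both Bianchi identities to their classical counterparts for the affine connection $\nabla^V$ by invoking the privileged extension $V$ of Proposition \ref{assump}. First I would fix $v \in A$ and choose, via Proposition \ref{assump}, an $A$-admissible $V$ on some open $\Omega$ with $\nabla^v_X V = 0$ for all $X \in \mathfrak{X}(\Omega)$. Since the identities are ${\mathcal F}(A)$-multilinear tensor equations, it suffices to verify them on vector fields $u, w, z, b$ extended to commuting coordinate fields $X, Y, Z, W$ (all Lie brackets zero); this is exactly the hypothesis under which \eqref{curvaturecomp} and \eqref{dercurvaturecomp} hold.

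\medskip

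For the \textbf{first Bianchi identity}, I would start from \eqref{curvaturecomp}, which gives $R_v(X,Y)Z = R^V_{\pi(v)}(X,Y)Z$, reducing the anisotropic curvature at $v$ to the classical curvature of $\nabla^V$. The classical first Bianchi identity for an affine connection with torsion reads $\sum_{cyc} R^V(X,Y)Z = \sum_{cyc}\big(\nabla^V_X {\mathcal T}^V(Y,Z) + {\mathcal T}^V({\mathcal T}^V(X,Y),Z)\big)$ (in the commuting-fields case). The task then is to translate the right-hand side back into anisotropic quantities. Here I must be careful: the torsion ${\mathcal T}^V$ of the affine connection and the anisotropic torsion ${\mathcal T}$ agree at $\pi(v)$, since $[X,Y]=0$ makes both equal to $\nabla^V_X Y - \nabla^V_Y X$. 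Similarly, the affine derivative $\nabla^V_X {\mathcal T}^V$ at $\pi(v)$ must be matched with $(\nabla_u {\mathcal T})_v$; this is precisely the content of the identity $(\nabla_X T)_v = (\nabla^V_X(T_V))_{\pi(v)}$ applied to $T = {\mathcal T}$, which is guaranteed by Proposition \ref{assump}.

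\medskip

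For the \textbf{second Bianchi identity}, the strategy is the same but the bookkeeping is heavier. I would begin with the classical second Bianchi identity for $\nabla^V$, namely $\sum_{cyc}\big(\nabla^V_X R^V(Y,Z) + R^V({\mathcal T}^V(X,Y),Z)\big) = 0$ (commuting-fields form). The crucial step is to replace $\nabla^V_X R^V$ by the anisotropic $\nabla_X R$. This is exactly where \eqref{dercurvaturecomp} enters: it shows that $(\nabla_X R)_v = (\nabla^V_X R^V)_{\pi(v)}$ differs from the purely affine derivative by the two $P$-correction terms $-P_v(Z,W,\nabla^V_X \nabla^V_Y V) + P_v(Y,W,\nabla^V_X\nabla^V_Z V)$. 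I expect the main obstacle to be verifying that the cyclic sum of these vertical-derivative correction terms collapses to the single term $-\sum_{cyc} P_v(w,b,R_v(u,z)v)$ appearing in the statement. The mechanism is that at $\pi(v)$ one has $\nabla^V_X V = 0$, so $\nabla^V_X \nabla^V_Y V = R^V(X,Y)V = R_v(X,Y)v$ by \eqref{curvaturecomp} (the second-derivative-of-$V$ picks up exactly the curvature acting on $V$ when the first derivatives vanish); combined with the antisymmetry of $R_v$ in its first two slots and the cyclic summation, the four $P$-terms should coalesce into the expected single cyclic $P$-term. Confirming this cancellation—tracking which index plays which role in $P_v(w,b,\cdot)$ and reconciling the signs with the antisymmetry of $R$—is the delicate part of the argument.
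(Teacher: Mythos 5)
Your proposal is correct and follows essentially the same route as the paper: reduce to the classical Bianchi identities for the affine connection $\nabla^V$ (Kobayashi--Nomizu) using the privileged extension of Proposition \ref{assump}, then translate back via $(\nabla_X T)_v=(\nabla^V_X(T_V))_{\pi(v)}$, \eqref{curvaturecomp} and \eqref{dercurvaturecomp}. The one point to state more carefully is that only the antisymmetrized combination $\nabla^V_X\nabla^V_YV-\nabla^V_Y\nabla^V_XV$ equals $R^V(X,Y)V$ at $\pi(v)$ (not each second derivative separately), but since the cyclic sum pairs the six $P$-correction terms into exactly these antisymmetric combinations, your cancellation mechanism is the right one.
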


\begin{proof}
Consider extensions $X,Y,Z,W$  of $u,w,z,b$ respectively  in such a way that its Lie brackets are zero  and an extension $V$ of $v$ satisfying \eqref{magiccond}.
 Recall that $R^V$ satisfies the  Bianchi Identities (see for example \cite[Th. 5.3]{KN63}). Moreover,  observe that with our choice of $V$,
$\nabla^V_X({\mathcal T}_V)_{\pi(v)}=(\nabla_X{\mathcal T})_v$ (recall Prop. \ref{assump}) and it holds \eqref{curvaturecomp} and \eqref{dercurvaturecomp}.
 Making the cyclic sum, one easily concludes the second Bianchi identity.
\end{proof}
Finally, we will give the vertical Bianchi identity.
\begin{prop}
Let $\nabla$ be an $A$-anisotropic connection and $P$, $\mathcal T$ and $R$ its vertical derivative and torsion  and curvature tensors respectively. For every $v\in A$ and $u,w,z,b\in T_{\pi(v)}M$,
\begin{multline}\label{verticalBianchi}
(\partial^\nu R)_v(u,w,z,b)=(\nabla_uP)_v(w,z,b)-(\nabla_wP)_v(u,z,b)
+P_v({\mathcal T}_v(u,w),z,b)\\-P_v(w,z,P_v(u,v,b))+P_v(u,z,P_v(w,v,b)).
\end{multline}
\end{prop}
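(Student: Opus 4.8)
The plan is to verify \eqref{verticalBianchi} in a natural coordinate system. Since every term appearing in it is an $A$-anisotropic tensor, it suffices to check the identity on the coordinate frame fields $\partial_m,\partial_j,\partial_i,\partial_p$ and then extend by $\mathcal F(A)$-multilinearity. I abbreviate $\dot\partial_p=\frac{\partial}{\partial y^p}$ and write $\frac{\delta}{\delta x^m}=\frac{\partial}{\partial x^m}-N^a_m\dot\partial_a$ for the horizontal derivative, where $N^a_m=v^b\Gamma^a_{\,mb}$ are the nonlinear connection coefficients that one reads off from \eqref{Firstcur2}. With this notation the coordinate formula \eqref{Firstcur2} reads $R_v(\partial_m,\partial_j)\partial_i=\mathcal R^k_{\,mji}\,(\partial_k)_p$ with
\[\mathcal R^k_{\,mji}=\frac{\delta}{\delta x^m}\Gamma^k_{\,ji}-\frac{\delta}{\delta x^j}\Gamma^k_{\,mi}+\Gamma^l_{\,ji}\Gamma^k_{\,ml}-\Gamma^l_{\,mi}\Gamma^k_{\,jl},\]
while \eqref{Pcoord} gives $P^k_{\,jip}=\dot\partial_p\Gamma^k_{\,ji}$, the torsion has components $\mathcal T^a_{\,mj}=\Gamma^a_{\,mj}-\Gamma^a_{\,jm}$, and by Definition \ref{defi:vert} one has $(\partial^\nu R)_v(\partial_m,\partial_j,\partial_i,\partial_p)=\dot\partial_p\mathcal R^k_{\,mji}\,(\partial_k)_p$.

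First I would record the coordinate form of $\nabla P$. Viewing $P$ as a tensor of type \eqref{forthecurv} and applying \eqref{productrule} together with \eqref{partialT} on the frame fields, the terms involving $\frac{\partial}{\partial x^m}V^c$ coming from the chosen extension cancel---exactly as in the general computation of the first term of $\nabla_XT$---leaving the $V$-independent expression
\begin{multline*}
(\nabla_{\partial_m}P)^k_{\,jip}=\frac{\delta}{\delta x^m}P^k_{\,jip}+\Gamma^k_{\,ma}P^a_{\,jip}\\
-\Gamma^a_{\,mj}P^k_{\,aip}-\Gamma^a_{\,mi}P^k_{\,jap}-\Gamma^a_{\,mp}P^k_{\,jia}.
\end{multline*}

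Next I compute the left-hand side by applying $\dot\partial_p$ to $\mathcal R^k_{\,mji}$. The only non-routine ingredient is the commutator of the two derivations, $[\dot\partial_p,\frac{\delta}{\delta x^m}]=-(\dot\partial_p N^a_m)\dot\partial_a$ with $\dot\partial_p N^a_m=\Gamma^a_{\,mp}+v^bP^a_{\,mbp}$. Using this on the two horizontal terms of $\mathcal R^k_{\,mji}$ and the ordinary Leibniz rule on the two quadratic terms gives
\begin{multline*}
\dot\partial_p\mathcal R^k_{\,mji}=\frac{\delta}{\delta x^m}P^k_{\,jip}-\frac{\delta}{\delta x^j}P^k_{\,mip}\\
-(\Gamma^a_{\,mp}+v^bP^a_{\,mbp})P^k_{\,jia}+(\Gamma^a_{\,jp}+v^bP^a_{\,jbp})P^k_{\,mia}\\
+\Gamma^k_{\,ml}P^l_{\,jip}-\Gamma^k_{\,jl}P^l_{\,mip}+\Gamma^l_{\,ji}P^k_{\,mlp}-\Gamma^l_{\,mi}P^k_{\,jlp}.
\end{multline*}

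Finally I expand the right-hand side of \eqref{verticalBianchi} on the same frame and match. The difference $(\nabla_{\partial_m}P)^k_{\,jip}-(\nabla_{\partial_j}P)^k_{\,mip}$ reproduces the two $\frac{\delta}{\delta x}P$ terms, the four $\Gamma$-linear terms above, and in addition the antisymmetric piece $-(\Gamma^a_{\,mj}-\Gamma^a_{\,jm})P^k_{\,aip}=-\mathcal T^a_{\,mj}P^k_{\,aip}$; this is cancelled precisely by the torsion term $P_v(\mathcal T_v(\partial_m,\partial_j),\partial_i,\partial_p)=\mathcal T^a_{\,mj}P^k_{\,aip}$. The two nested terms $-P_v(\partial_j,\partial_i,P_v(\partial_m,v,\partial_p))+P_v(\partial_m,\partial_i,P_v(\partial_j,v,\partial_p))$ equal $-v^lP^a_{\,mlp}P^k_{\,jia}+v^lP^a_{\,jlp}P^k_{\,mia}$, which are exactly the $v^bP^a_{\,mbp}$-contributions of the commutator. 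Hence both sides agree term by term. The main obstacle is thus organisational rather than conceptual: keeping the index bookkeeping under control and, above all, recognising that the commutator $[\dot\partial_p,\frac{\delta}{\delta x^m}]$ splits into a $\Gamma^a_{\,mp}$-part, which combines with the $\Gamma$-linear part of $\nabla P$ to produce the torsion cancellation, and a $v^bP^a_{\,mbp}$-part, which supplies exactly the quadratic $P\!\circ\!P$ terms on the right.
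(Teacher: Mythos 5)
Your proof is correct, but it takes a genuinely different route from the paper. The paper's argument is coordinate-free: it chooses extensions $X,Y,Z,W$ with vanishing Lie brackets and an $A$-admissible extension $V$ of $v$ satisfying the privileged condition \eqref{magiccond} of Proposition \ref{assump}, and then differentiates the building blocks of \eqref{RvRV} in the vertical direction, computing $\frac{d}{dt}\big(\nabla^{V+tW}_X\nabla^{V+tW}_YZ\big)\big|_{t=0}$ and $\frac{d}{dt}P_{v+tw}(Y,Z,\nabla^{V+tW}_X(V+tW))\big|_{t=0}$ together with their $X\leftrightarrow Y$ counterparts; the quadratic $P\circ P$ terms come from the derivative of the $P$-correction in \eqref{RvRV}, and after cancellation one reads off \eqref{verticalBianchi}. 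You instead verify the identity on a natural coordinate frame, starting from \eqref{Firstcur2} and \eqref{Pcoord}, and your computation checks out: the coordinate expression of $\nabla P$ is right (the $\partial V^c/\partial x^m$ cancellation you invoke is exactly the content of \eqref{partialT}), the commutator $[\dot\partial_p,\delta/\delta x^m]=-(\dot\partial_p N^a_m)\dot\partial_a$ with $\dot\partial_p N^a_m=\Gamma^a_{\,mp}+v^bP^a_{\,mbp}$ is the correct key identity, and I have confirmed that the resulting terms match those of the right-hand side term by term, with the $\Gamma^a_{\,mj}P^k_{\,aip}$ pieces cancelling against the torsion term exactly as you say. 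What each approach buys: yours is self-contained and mechanical --- it does not require the existence of the privileged extension of Proposition \ref{assump}, and it makes the origin of each term (torsion from the antisymmetrization of $\Gamma^a_{\,mj}$, quadratic terms from the $v$-dependence of the nonlinear connection) completely transparent; the paper's proof avoids coordinates altogether, which is precisely the methodological point the paper is advertising, and reduces the bookkeeping to Leibniz-rule manipulations of $\nabla^V$. Since every object involved is $\mathcal F(A)$-multilinear in $u,w,z,b$, your reduction to frame fields is legitimate, so the argument is complete as stated.
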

\begin{proof}
Let $V,X,Y,Z,W$ be vector fields extensions of $v,u,w,z,b$ with $V$, $A$-admissible satisfying  \eqref{magiccond}, and such that the Lie brackets of $X,W,Z,Y$ cancel. Then
\begin{align*}
\left.\frac{d}{dt}\right|_{t=0}(\nabla^{V+tW}_X\nabla^{V+tW}_YZ)_{\pi(v)}=&\nabla^v_X(P_V(Y,Z,W))
+P_v(X,\nabla^V_YZ,W)\\
=&(\nabla_XP)_v(Y,Z,W)+P_v(\nabla^V_XY,Z,W)\\&+P_v(Y,\nabla_X^VZ,W)
 +P_v(Y,Z,\nabla^V_XW)\\&+P_v(X,\nabla_Y^VZ,W)
\end{align*}
and 
\begin{multline*}
\left.\frac{d}{dt}\right|_{t=0}P_{v+tw}(Y,Z,\nabla^{V+tW}_X(V+tW))=P_v(Y,Z,P_V(X,V,W))\\+P_v(Y,Z,\nabla^V_XW).
\end{multline*}
Taking into account  the above identities together with the ones obtained by interchanging $X$ and $Y$ in those identities and replacing the four identities  in the definition  of $\partial^\nu R$, after much cancellation, one concludes \eqref{verticalBianchi}.
\end{proof}

\subsection{Comparison of the curvature tensors}\label{curvdiff} Observe that given two different $A$-anisotropic connections $\hat\nabla$ and $\nabla$ defined in the same open subset $A\subset TM$, their difference is an $A$-anisotropic tensor defined as
\begin{equation}\label{differencetensor}
Q_v(X,Y)=\hat\nabla^v_XY-\nabla^v_XY,
\end{equation}
for any $X,Y\in {\mathfrak X}(M)$. 
Let us relate the curvature tensors of both connections.
\begin{prop}
Let $\hat{R}$, $R$ be the curvature tensors associated with $\hat\nabla$ and $\nabla$ respectively, and $u,w,z\in T_{\pi(v)}M$. Then
\begin{align}\label{RoftildeR}
\hat{R}_v(u,w)z=&R_v(u,w)z-P_v(w,z,Q_v(u,v))+P_v(u,z,Q_v(w,v))
+Q'_v(u,w)z
\end{align}
where
\begin{align}
Q'_v(u,w)z=& (\nabla_uQ)_v(w,z)-(\nabla_wQ)_v(u,z)
+Q_v({\mathcal T}(u,w),z)\nonumber\\&+\partial^\nu Q_v(u,z,Q_v(w,v))-\partial^\nu Q_v(w,z,Q_v(u,v))\\&
+Q_v(u,Q_v(w,z))-Q_v(w,Q_v(u,z)).\label{Qprima}
\end{align}
\end{prop}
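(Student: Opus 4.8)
The plan is to reduce the computation to the special extension furnished by Proposition \ref{assump}, which simultaneously trivializes the unhatted connection and converts all the hatted quantities into the difference tensor $Q$. Since both $\hat R$ and $R$ are ${\mathcal F}(A)$-multilinear in their arguments, the identity \eqref{RoftildeR} is pointwise in $u,w,z$, so it suffices to establish it for one convenient triple of extensions. Accordingly, I would fix $v\in A$, choose an $A$-admissible extension $V$ of $v$ satisfying the magic condition \eqref{magiccond} for $\nabla$, i.e.\ $\nabla^v_X V=0$ for all $X$, and choose extensions $X,Y,Z\in\mathfrak{X}(\Omega)$ of $u,w,z$ whose pairwise Lie brackets vanish (e.g.\ coordinate fields).

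First I would apply the general curvature formula \eqref{RvRV} to the connection $\hat\nabla$, obtaining
\[\hat R_v(X,Y)Z=(\hat R^V)_p(X,Y)Z-(\hat P_V)_p(Y,Z,\hat\nabla^V_X V)+(\hat P_V)_p(X,Z,\hat\nabla^V_Y V),\]
where $\hat P$ and $\hat R^V$ denote, respectively, the vertical derivative of $\hat\nabla$ and the curvature of the affine connection $\hat\nabla^V$. The crux is that $V$ is parallel for $\nabla$ but \emph{not} for $\hat\nabla$: from \eqref{differencetensor} one has $\hat\nabla^V_X V=\nabla^V_X V+Q_V(X,V)$, so evaluating at $p$ and using \eqref{magiccond} gives $(\hat\nabla^V_X V)_p=Q_v(u,v)$ and $(\hat\nabla^V_Y V)_p=Q_v(w,v)$. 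These are exactly the correction vectors that produce the $P$- and $\partial^\nu Q$-terms of the statement.

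Next I would compute the three ingredients separately. Writing $\hat\nabla=\nabla+Q$ and differentiating vertically via Definition \ref{defi:vert} yields $\hat P_v=P_v+(\partial^\nu Q)_v$, so substituting $(\hat\nabla^V_{\cdot} V)_p=Q_v(\cdot,v)$ splits each of the two $\hat P$-terms into a $P_v(\cdots,Q_v(\cdot,v))$ piece and a $(\partial^\nu Q)_v(\cdots,Q_v(\cdot,v))$ piece. For $(\hat R^V)_p$ I would invoke the classical comparison of curvatures of two affine connections differing by the tensor $Q_V$, namely
\[\hat R^V(X,Y)Z=R^V(X,Y)Z+(\nabla^V_XQ_V)(Y,Z)-(\nabla^V_YQ_V)(X,Z)+Q_V(\mathcal T^V(X,Y),Z)+Q_V(X,Q_V(Y,Z))-Q_V(Y,Q_V(X,Z)),\]
with $\mathcal T^V$ the torsion of $\nabla^V$; this is a standard Riemannian-style identity that I would derive in one line by expanding $\hat\nabla=\nabla+Q$. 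Evaluating at $p$ and using Proposition \ref{assump} (which gives $(\nabla_X Q)_v=(\nabla^V_X Q_V)_p$, $(\mathcal T^V)_p=\mathcal T_v$, and, by \eqref{curvaturecomp}, $(R^V)_p(X,Y)Z=R_v(u,w)z$) turns this into $R_v(u,w)z$ plus exactly the derivative, torsion and quadratic terms of $Q'_v$ in \eqref{Qprima}.

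Finally I would assemble the three pieces. The affine part supplies the $(\nabla_u Q)_v$, $(\nabla_w Q)_v$, $Q_v(\mathcal T(u,w),z)$ and $Q_v(\cdot,Q_v(\cdot,\cdot))$ terms; the $P$-parts of the two $\hat P$-terms supply precisely $-P_v(w,z,Q_v(u,v))+P_v(u,z,Q_v(w,v))$ displayed outside $Q'_v$; and the $\partial^\nu Q$-parts supply $+\partial^\nu Q_v(u,z,Q_v(w,v))-\partial^\nu Q_v(w,z,Q_v(u,v))$, completing $Q'_v$. Collecting terms gives \eqref{RoftildeR}. I expect the main obstacle to be purely bookkeeping: correctly matching the argument slots and signs of the antisymmetric $X\leftrightarrow Y$ pairs, and being careful throughout that the vertical derivative $P$ is that of the base connection $\nabla$, while the extra $\partial^\nu Q$ arises solely from the direction dependence of $Q$.
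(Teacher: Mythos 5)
Your argument is correct, and it is the mirror image of the paper's. The paper chooses the extension $V$ to satisfy \eqref{magiccond} for $\hat\nabla$ (so that $\hat R_v$ is just the bare commutator $(\hat\nabla^V_X\hat\nabla^V_Y Z-\hat\nabla^V_Y\hat\nabla^V_X Z)_{\pi(v)}$ and $\nabla^V_XV=-Q_V(X,V)$), then expands $\hat\nabla^V=\nabla^V+Q_V$ inside that commutator and recognizes $R_v$ at the end via \eqref{RvRV} applied to $\nabla$; the $\partial^\nu Q$-terms there come from differentiating the direction-dependent tensor $Q_V(Y,Z)$ along $X$ via \eqref{DY}. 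You instead make $V$ parallel for $\nabla$, apply \eqref{RvRV} to $\hat\nabla$, and split each of the three resulting pieces into its $\nabla$-part plus a $Q$-correction: $(\hat\nabla^V_XV)_p=Q_v(u,v)$, $\hat P=P+\partial^\nu Q$, and the classical two-affine-connections curvature comparison for $\hat R^V$ versus $R^V$. Both routes are valid and of comparable length; yours makes the provenance of each term in \eqref{RoftildeR} more transparent (the displayed $P$-terms and the $\partial^\nu Q$-terms of \eqref{Qprima} come from $\hat P$, the rest from the affine comparison), at the cost of needing the extra observation $\hat P=P+\partial^\nu Q$ and the imported Riemannian lemma, whereas the paper's choice only requires \eqref{curvaturecomp} for $\hat\nabla$ and never mentions $\hat P$. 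The one point to state explicitly in your write-up is that the torsion appearing in the affine comparison is $(\mathcal T^V)_p$, which equals $\mathcal T_v$ for the anisotropic torsion of $\nabla$ (not of $\hat\nabla$), so that the term $Q_v(\mathcal T_v(u,w),z)$ is unambiguous; with that noted, the bookkeeping closes exactly as you predict.
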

\begin{proof}
Let $V,X,Y,Z$ be local extensions of $v,u,w,z$, respectively, being $V$, $A$-admissible. We can assume that $[X,Y]=0$, and $V$ satisfies \eqref{magiccond} for the connection $\hat\nabla$. Then $\nabla^v_XV=-Q_v(X,V)$ and $\nabla^v_YV=-Q_v(Y,V)$. It follows that
\begin{align}\label{curvature}
\hat R_v(X,Y)Z=(\hat\nabla^V_X\hat\nabla^V_YZ
-\hat\nabla^V_Y\hat\nabla^V_XZ)_{\pi(v)}.
\end{align}
Moreover, 
\begin{align}
\hat\nabla^V_X\hat\nabla^V_YZ=&\hat\nabla^V_X(\nabla^V_YZ+Q_V(Y,Z))\nonumber\\
=& \nabla^V_X \nabla^V_YZ+Q_V(X,\nabla^V_YZ)+\nabla^V_X(Q_V(Y,Z))+Q_V(X,Q_V(Y,Z)).\label{firstpartR}
\end{align}
Analogously,
\begin{align}\label{secondpartR}
\hat\nabla^V_Y\hat\nabla^V_XZ= \nabla^V_Y \nabla^V_XZ+Q_V(Y,\nabla^V_XZ)+\nabla^V_Y(Q_V(X,Z))+Q_V(Y,Q_V(X,Z)).
\end{align}
We also have that
\begin{multline}\label{derivQ}
(\nabla^V_X(Q_V(Y,Z)))_{\pi(v)}=(\nabla_XQ)_v(Y,Z)+Q_v(\nabla^V_XY,Z)+Q_v(Y,\nabla^V_XZ)\\+\partial^\nu Q_v(Y,Z,-Q_V(X,V)),
\end{multline}
\begin{multline}\label{derivQ2}
(\nabla^V_Y(Q_V(X,Z)))_{\pi(v)}=(\nabla_YQ)_v(X,Z)+Q_v(\nabla^V_YX,Z)+Q_v(X,\nabla^V_YZ)\\+\partial^\nu Q_v(X,Z,-Q_V(Y,V)),
\end{multline}
and 
\begin{multline}\label{secCurvature}
 R_v(X,Y)Z=(\nabla^V_X\nabla^V_YZ
-\nabla^V_Y\nabla^V_XZ)_{\pi(v)}\\ -P_v(Y,Z,-Q_V(X,V))+P_v(X,Z,-Q_V(Y,V)).
\end{multline}
Using successively in \eqref{curvature}, the identities \eqref{firstpartR}-
\eqref{secCurvature}, we finally get \eqref{RoftildeR}.
\end{proof}
\begin{cor}\label{Qv0}
Given two $A$-anisotropic connections with a difference tensor $Q_v$ which satisfies $Q_v(u,v)=0$ for every $v\in A$ and $u\in T_{\pi(v)}M$, then
\begin{multline}\label{Qvuv}
\hat{R}_v(u,w)z=R_v(u,w)z+(\nabla_uQ)_v(w,z)-(\nabla_wQ)_v(u,z)+Q_v({\mathcal T}(u,w),z)\\+Q_v(u,Q_v(w,z))-Q_v(w,Q_v(u,z))
\end{multline}
and $\hat{R}_v(u,w)v=R_v(u,w)v$ for any $v\in A$ and $u,w,z\in T_{\pi(v)}M$.
\end{cor}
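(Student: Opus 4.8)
The plan is to read off the Corollary directly from the general comparison formula \eqref{RoftildeR}--\eqref{Qprima} by substituting the hypothesis $Q_v(u,v)=0$, and then to treat the ``diagonal'' identity $\hat R_v(u,w)v=R_v(u,w)v$ as a separate, slightly more delicate, consequence.

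First I would simplify \eqref{RoftildeR}. The hypothesis $Q_v(u,v)=0$ (for every $u$) means that in \eqref{RoftildeR} the terms $P_v(w,z,Q_v(u,v))$ and $P_v(u,z,Q_v(w,v))$ have a vanishing last entry; since $P$ is an $A$-anisotropic tensor, hence linear in each slot, both terms are zero. The same reasoning applies inside \eqref{Qprima}: the vertical derivative $\partial^\nu Q$ is by definition linear in its last argument, so $\partial^\nu Q_v(u,z,Q_v(w,v))$ and $\partial^\nu Q_v(w,z,Q_v(u,v))$ also vanish. What survives of $Q'_v(u,w)z$ is exactly $(\nabla_uQ)_v(w,z)-(\nabla_wQ)_v(u,z)+Q_v(\mathcal T(u,w),z)+Q_v(u,Q_v(w,z))-Q_v(w,Q_v(u,z))$, which is \eqref{Qvuv}.

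For the second identity I would set $z=v$ in \eqref{Qvuv} and check that every correction term drops out. Three of them are immediate from the hypothesis: $Q_v(\mathcal T(u,w),v)=0$, and $Q_v(u,Q_v(w,v))=Q_v(u,0)=0$, and likewise $Q_v(w,Q_v(u,v))=0$. The one genuinely nontrivial point---and the main obstacle---is to show that $(\nabla_uQ)_v(w,v)=0$ (and, symmetrically, $(\nabla_wQ)_v(u,v)=0$). This is \emph{not} a formal consequence of $Q_v(\cdot,v)=0$, because here $v$ occupies simultaneously the direction at which we evaluate and one of the arguments, so one must differentiate the relation $Q_v(\cdot,v)=0$ and keep track of how the ``diagonal'' direction moves under differentiation.

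To handle this I would invoke Proposition \ref{assump}: choose an $A$-admissible extension $V$ of $v$ on an open set $\Omega$ with $\nabla^v_XV=0$ for all $X$, so that $(\nabla_XQ)_v=(\nabla^V_X(Q_V))_{\pi(v)}$ and the computation reduces to the ordinary connection $\nabla^V$. The hypothesis says precisely that the classical tensor $Q_V$ satisfies $Q_V(\,\cdot\,,V)\equiv 0$ on $\Omega$, since $(Q_V)_p(w,V_p)=Q_{V_p}(w,V_p)=0$. Differentiating this identity with the usual Leibniz rule for $\nabla^V$ and evaluating at $p=\pi(v)$ gives $0=(\nabla^V_XQ_V)_p(w,V_p)+Q_{V_p}((\nabla^V_XW)_p,V_p)+Q_{V_p}(w,\nabla^v_XV)$; the second term vanishes again by $Q_{V_p}(\cdot,V_p)=0$ and the third by the defining property $\nabla^v_XV=0$ of the extension. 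Hence $(\nabla_uQ)_v(w,v)=(\nabla^V_XQ_V)_p(w,V_p)=0$, and the symmetric term vanishes the same way, so all corrections in \eqref{Qvuv} disappear and $\hat R_v(u,w)v=R_v(u,w)v$ follows. (Equivalently, this rests on the fact that the tautological section $v\mapsto v$ is parallel for every $A$-anisotropic connection, which can be read off directly from \eqref{DY}.)
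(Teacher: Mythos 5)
Your proof is correct and follows essentially the same route as the paper: \eqref{Qvuv} is obtained by substituting $Q_v(\cdot,v)=0$ into \eqref{RoftildeR}--\eqref{Qprima}, and the diagonal identity is reduced to showing $(\nabla_uQ)_v(w,v)=0$ via an extension $V$ satisfying \eqref{magiccond} and the Leibniz rule. You have simply written out in full the steps the paper compresses into ``follows straightforwardly'' and ``apply definitions.''
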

\begin{proof}
The identity \eqref{Qvuv} follows straightforwardly from \eqref{RoftildeR}. For the identity $\hat{R}_v(u,w)v=R_v(u,w)v$ we only need to use \eqref{Qvuv} observing that 
$(\nabla_uQ)_v(w,v)=0$. In order to check this, consider local extensions $X,Y,V$ of $u,w,v$, respectively, with $V$, $A$-admissible satisfying \eqref{magiccond}, and apply definitions.
\end{proof}
\begin{prop}\label{diffJacobi}
Let $\nabla$ be a
 torsion-free $A$-anisotropic connection with vertical derivative satisfying \eqref{vertprop}, and $\hat{\nabla}$ any other torsion-free $A$-anisotropic connection  with difference tensor \eqref{differencetensor} with respect to $\nabla$ satisfying
\begin{equation}\label{cond}
\text{$Q_v(v,u)=0,\,\,$
$\forall v\in A$ and $u\in T_{\pi(v)}M$}.
\end{equation}
Then 
\begin{enumerate}[(i)]
\item for every $v\in A$ and $u,w\in T_{\pi(v)}M$, the vertical derivative of $Q$ satisfies that
\begin{equation}\label{partialL}
\partial^\nu Q_v(v,u,w)=-Q_v(w,u),
\end{equation}
\item the  vertical derivative of $\hat{\nabla}$ satisfies \eqref{vertprop},  
\item  $\hat\nabla$ has the same curvature operator  (recall Def. \ref{def:curvoper}) and the same Jacobi equation \eqref{Jacobieq} as $\nabla$.
\end{enumerate}
\end{prop}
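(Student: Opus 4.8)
The plan is to dispatch the three claims in turn, the whole argument resting on the single relation $Q_v(v,\cdot)=0$ from \eqref{cond}, the symmetry of $Q$ forced by torsion-freeness, and Corollary \ref{Qv0}. For part (i) I would start from the fact that \eqref{cond} holds at every point of $A$: for each $\tilde v\in A$ and $\tilde u\in T_{\pi(\tilde v)}M$ one has $Q_{\tilde v}(\tilde v,\tilde u)=0$. Fixing $v\in A$ and $u,w\in T_{\pi(v)}M$, I would specialise this identity to the vertical curve $t\mapsto v+tw$ (which lies in $A$ for small $t$, since $A$ is open) and differentiate $Q_{v+tw}(v+tw,u)=0$ at $t=0$. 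By the product rule, differentiating the base point $v+tw$ produces $(\partial^\nu Q)_v(v,u,w)$, while differentiating the first argument produces $Q_v(w,u)$, and the second argument is constant; hence $(\partial^\nu Q)_v(v,u,w)+Q_v(w,u)=0$, which is exactly \eqref{partialL}.

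For part (ii) I would first record the symmetry of $Q$: subtracting the two torsion formulas \eqref{torsion} for $\hat\nabla$ and $\nabla$ gives $\hat{\mathcal T}_v(X,Y)-{\mathcal T}_v(X,Y)=Q_v(X,Y)-Q_v(Y,X)$, and since both connections are torsion-free, $Q_v(X,Y)=Q_v(Y,X)$. Thus \eqref{cond} upgrades to $Q_v(u,v)=Q_v(v,u)=0$, which is precisely the hypothesis of Corollary \ref{Qv0}. Writing $\hat\nabla=\nabla+Q$ from \eqref{differencetensor} and differentiating vertically, the vertical derivative $\hat P$ of $\hat\nabla$ satisfies $\hat P_v(X,Y,Z)=P_v(X,Y,Z)+(\partial^\nu Q)_v(X,Y,Z)$. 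Evaluating at $(v,v,u)$, the first term vanishes by \eqref{vertprop}, while by part (i) the second term equals $-Q_v(u,v)$, which is zero by the symmetry just established; hence $\hat P_v(v,v,u)=0$, so $\hat\nabla$ satisfies \eqref{vertprop}.

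For part (iii) I would argue on three levels, all powered by $Q_v(v,\cdot)=0$. First, the two connections share the same auto-parallel curves, because the difference of their auto-parallel equations is governed by $Q_{\dot\gamma}(\dot\gamma,\dot\gamma)$, which vanishes by \eqref{cond}. Second, along such a curve the induced covariant derivatives in the direction $\dot\gamma$ coincide: for any $X\in{\mathfrak X}(\gamma)$ the difference $\hat D^{\dot\gamma}_\gamma X-D^{\dot\gamma}_\gamma X$ equals $Q_{\dot\gamma}(\dot\gamma,X)=0$, so $(\hat D^{\dot\gamma}_\gamma)^2=(D^{\dot\gamma}_\gamma)^2$ on vector fields along $\gamma$. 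Third, by Corollary \ref{Qv0} (whose hypothesis is met by part (ii)'s symmetry remark) one has $\hat R_v(u,w)v=R_v(u,w)v$, whence $\hat R_{\dot\gamma}(\dot\gamma,U)\dot\gamma=R_{\dot\gamma}(\dot\gamma,U)\dot\gamma$, so the curvature operators of Definition \ref{def:curvoper} agree. Finally, since $\hat\nabla$ is torsion-free and, by part (ii), satisfies \eqref{vertprop}, Proposition \ref{Jacoboper} gives its Jacobi equation as $(\hat D^{\dot\gamma}_\gamma)^2J=\hat R_{\dot\gamma}(\dot\gamma,J)\dot\gamma$, and the two identifications above turn this into \eqref{Jacobieq} for $\nabla$.

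The only genuinely delicate point is the bookkeeping of the product rule in part (i): one must check that the term from differentiating the base point is exactly $(\partial^\nu Q)_v(v,u,w)$ with the slots in the stated order, and that the derivative of the first argument contributes $Q_v(w,u)$ rather than $Q_v(u,w)$. Everything else is direct substitution, with the symmetry of $Q$ doing the essential work of converting the hypothesis $Q_v(v,u)=0$ into the form $Q_v(u,v)=0$ needed both to apply Corollary \ref{Qv0} and to kill $(\partial^\nu Q)_v(v,v,u)$ in part (ii).
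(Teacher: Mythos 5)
Your proposal is correct and follows essentially the same route as the paper: part (i) by differentiating $Q_{v+tw}(v+tw,u)=0$ in $t$, part (ii) by writing $\hat P_v(v,v,u)=P_v(v,v,u)+(\partial^\nu Q)_v(v,v,u)=-Q_v(u,v)=0$ using the symmetry of $Q$ coming from torsion-freeness, and part (iii) from $(\hat D^{\dot\gamma}_\gamma)^2=(D^{\dot\gamma}_\gamma)^2$ together with the last statement of Corollary \ref{Qv0}. The extra bookkeeping you supply (the explicit derivation of the symmetry of $Q$ and the slot order in the product rule) is exactly the implicit content of the paper's argument.
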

\begin{proof}
Observe that for any vector field $J$ along $\gamma$, 
$(\hat{D}^{\dot\gamma}_{\gamma})^2 J=(D^{\dot\gamma}_{\gamma})^2J$ because of condition \eqref{cond}, where $\hat D_\gamma$ and $D_\gamma$ are the  $A$-anisotropic covariant derivatives along $\gamma$ induced by $\hat\nabla$ and $\nabla$, respectively.  Moreover, using  that $Q_{v+tw}(v+tw,u)=0$ for every $t\in\R$ and computing the derivative with respect to $t$, we get \eqref{partialL}. If $\hat{P}$ is the vertical derivative of $\hat{\nabla}$, then
using \eqref{partialL} and \eqref{cond}, we get
\[\hat{P}_v(v,v,u)=P_v(v,v,u)+\partial^\nu Q_v(v,v,u)=-Q_v(u,v)=-Q_v(v,u)=0.\]
Here, we have also used that $Q$ is symmetric, because $\nabla$ and $\hat \nabla$ are both torsion-free.
 This implies that  the Jacobi equation for $\hat\nabla$ is of the form \eqref{Jacobieq}, and using the last statement of Cor. \ref{Qv0}, we conclude $(iii)$. 
\end{proof}
\section{Distinguished connections}\label{goodcon}
In this section, we will study a family of $A$-anisotropic connections which are suitable to study the geometry of pseudo-Finsler metrics.
Let $A\subset TM\setminus \bf 0$ be an open  conic subset, namely, an open subset of $TM$ satisfying that for every $v\in A$ and $\lambda>0$ we have that $\lambda v\in A$. We define a pseudo-Finsler metric on $A$ as a smooth, positive two-homogeneous function $L:A\subset TM\setminus 0\rightarrow \R$,  such that its  fundamental tensor defined as 
\begin{equation}\label{fundten}
g_v(u,w):=\frac 12 \frac{\partial^2}{\partial t\partial s}L(v+tu+sw)|_{t=s=0}
\end{equation}
for every $v\in A$ and $u,w\in T_{\pi(v)}M$, is non-degenerate. The {\em Cartan tensor} associated with $L$ is defined as
\begin{equation}
C_v(w_1,w_2,w_3):=\left.\frac 14 \frac{\partial^3}{\partial s_3\partial s_2\partial s_1}L\left(v+\sum_{i=1}^3s_iw_i\right)\right|_{
s_1=s_2=s_3=0}.
\end{equation}
Recall that $C_v$ is symmetric and, by homogeneity, one has that $C_v(v,u,w)=C_v(u,v,w)=C_v(u,w,v)=0$ for any $v\in A$ and $u,w\in T_{\pi(v)}M$.  In this context, it is possible to define a Levi-Civita $A$-anisotropic connection, namely, a torsion-free $A$-anisotropic connection  $\nabla$ such that $\nabla g=0$, where $g$ is the fundamental tensor. This connection can be identified with the Chern connection (see \cite[Eqs. (7.20) and (7.21)]{Sh01} and \cite[\S 4.1 and \S 4.4]{Ja19}), so we will refer to it sometimes as the Levi-Civita-Chern connection.
 Moreover, the curvature tensor of this connection has some symmetric properties with respect to the fundamental tensor of the pseudo-Finsler metric. These symmetries can also be  found  in \cite[\S 3.4A]{BCS00}.
\begin{prop}
Let $(M,L)$ be a pseudo-Finsler manifold and $\nabla$, its Levi-Civita-Chern connection. Then  the curvature tensor $R$ associated with $\nabla$ satisfies the symmetries:
\begin{align}\label{symR}
g_v(R_v(u,w)z,b)+g_v(R_v(u,w)b,z)=2C_v(R_v(w,u)v,z,b)
\end{align}
and 
\begin{multline}\label{seisB}
g_v(R_v(u,w)z,b)-g_v(R_v(z,b)u,w)=\\
C_v(R_v(w,z)v,u,b)+C_v(R_v(z,u)v,w,b)+C_v(R_v(u,b)v,z,w)\\
+C_v(R_v(b,w)v,z,u)+C_v(R_v(z,b)v,u,w)+C_v(R_v(w,u)v,z,b).
\end{multline}
\end{prop}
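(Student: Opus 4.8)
The plan is to reduce everything to computations with the osculating affine connection $\nabla^V$ afforded by the distinguished extension of Proposition~\ref{assump}, exploiting that the first vertical derivative of the fundamental tensor is the Cartan tensor. The starting observation is that $(\partial^\nu g)_v = 2\,C_v$, which is immediate from \eqref{fundten} and the definition of $C$. Fix $v\in A$, write $p=\pi(v)$, and choose via Proposition~\ref{assump} an $A$-admissible extension $V$ on a neighbourhood $\Omega$ of $p$ with $\nabla^v_XV=0$, i.e. $(\nabla^V_XV)_p=0$, for all $X$. Then by \eqref{curvaturecomp} one has $R_v(X,Y)Z=R^V_p(X,Y)Z$ whenever $[X,Y]=0$, and combining \eqref{productrule} and \eqref{partialT} with the metric compatibility $\nabla g=0$ of the Levi-Civita--Chern connection yields, at \emph{every} point of $\Omega$, the identity of $(0,2)$-tensor fields
\[
\nabla^V_Xg_V=2\,C_V(\cdot,\cdot,\nabla^V_XV),
\]
where $g_V$ and $C_V$ denote the osculating tensors $q\mapsto g_{V_q}$ and $q\mapsto C_{V_q}$. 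This is the engine for both symmetries.

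For \eqref{symR} I would compute $(R^V(X,Y)g_V)_p$ in two ways, using extensions $X,Y,Z,W$ of $u,w,z,b$ with $[X,Y]=0$. On the one hand, since $R^V(X,Y)=[\nabla^V_X,\nabla^V_Y]-\nabla^V_{[X,Y]}$ acts as a derivation annihilating functions, $(R^V(X,Y)g_V)(Z,W)=-g_V(R^V(X,Y)Z,W)-g_V(Z,R^V(X,Y)W)$, which at $p$ equals $-g_v(R_v(u,w)z,b)-g_v(R_v(u,w)b,z)$. On the other hand, differentiating the engine identity $\nabla^V_Yg_V=2\,C_V(\cdot,\cdot,\nabla^V_YV)$ once more with $\nabla^V_X$ and applying the Leibniz rule, every term carrying an undifferentiated factor $\nabla^V_YV$ vanishes at $p$ (this is where $(\nabla^V V)_p=0$ is used), leaving $(\nabla^V_X\nabla^V_Yg_V)_p(Z,W)=2\,C_v(z,b,(\nabla^V_X\nabla^V_YV)_p)$, and symmetrically with $X,Y$ interchanged. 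Subtracting and invoking $[X,Y]=0$ gives $(R^V(X,Y)g_V)_p(Z,W)=2\,C_v(z,b,R_v(u,w)v)$. Equating the two expressions and using the full symmetry of $C_v$ together with the antisymmetry $R_v(u,w)=-R_v(w,u)$ produces \eqref{symR}.

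The identity \eqref{seisB} I would then derive as a purely algebraic consequence of \eqref{symR}, the antisymmetry $R_v(u,w)=-R_v(w,u)$, and the first Bianchi identity $\sum_{\mathrm{cyc}}R_v(u,w)z=0$ (available since the connection is torsion-free). Abbreviating $R_{abcd}=g_v(R_v(a,b)c,d)$ and $S_{abcd}=C_v(R_v(b,a)v,c,d)$, the three inputs read $R_{abcd}=-R_{bacd}$, $R_{abcd}+R_{abdc}=2\,S_{abcd}$, and $R_{abcd}+R_{bcad}+R_{cabd}=0$. Writing the Bianchi relation with each of the four vectors in turn placed in the metric slot and forming the alternating combination $(\mathrm B_1)+(\mathrm B_2)-(\mathrm B_3)-(\mathrm B_4)$, one groups the twelve resulting terms into six pairs sharing a common unordered first pair; applying the first two inputs to each pair collapses the combination to $2R_{uwzb}-2R_{zbuw}$ plus six copies of $S$. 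Unwinding $S_{abcd}=C_v(R_v(b,a)v,c,d)$ and using again the antisymmetry of $R_v$ and the symmetry of $C_v$ matches these six terms, one by one, with the right-hand side of \eqref{seisB}.

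I expect the main obstacle to be twofold. The conceptual point is the correct handling of the second covariant derivative at $p$: although $\nabla^V_Xg_V$ vanishes at $p$ for the distinguished extension, its derivative does not, and one must verify that the surviving contribution is exactly $2\,C_v$ contracted with $R^V_p(X,Y)V=R_v(u,w)v$ — this is why the engine identity is needed as an identity of tensor fields on all of $\Omega$, not merely at $p$. The second, more mechanical, difficulty is the sign- and index-bookkeeping in the algebraic passage to \eqref{seisB}: one must track the orientation of each first pair when invoking $R_{abcd}=-R_{bacd}$, and correctly decide which of $R_{zbwu}$ or $R_{zbuw}$ survives, so that the six Cartan terms emerge with precisely the orderings displayed in \eqref{seisB}.
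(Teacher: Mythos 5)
Your argument is correct, and it shares with the paper only its first move: both proofs invoke Proposition~\ref{assump} to choose an extension $V$ with $\nabla^v_XV=0$, so that $R_v=R^V_{\pi(v)}$ and everything reduces to the affine connection $\nabla^V$. From that point the paper simply cites \cite[Prop.~3.1]{Jav14a} for the two symmetries, whereas you supply a complete, self-contained derivation; this is a genuine gain, since the reader need not leave the paper. Your ``engine identity'' $\nabla^V_Xg_V=2C_V(\cdot,\cdot,\nabla^V_XV)$ is exactly the almost-compatibility formula displayed in \S\ref{torsionfree} (with $\Q=0$), and your Ricci-identity computation is sound: all first-order correction terms (including $\nabla^V_{\nabla^V_XY}g_V$ and $(\nabla^V_XC_V)(\cdot,\cdot,\nabla^V_YV)$) vanish at $p$ because $\nabla^V_{(\cdot)}V$ does, leaving $2C_v(z,b,(\nabla^V_X\nabla^V_YV-\nabla^V_Y\nabla^V_XV)_p)=2C_v(z,b,R_v(u,w)v)$, where the last identification uses \eqref{curvaturecomp} with $Z=V$ --- a step worth stating explicitly. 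For \eqref{seisB}, the first Bianchi identity with vanishing right-hand side is available from \S\ref{bianchiid} since the Chern connection is torsion-free, and I confirm that your alternating combination closes: writing $S_{ab|cd}=C_v(R_v(b,a)v,c,d)$, the combination $\mathrm B_1+\mathrm B_2-\mathrm B_3-\mathrm B_4$ (metric slot $b,u,w,z$ respectively) collapses, via $R_{abcd}+R_{abdc}=2S_{ab|cd}$ and $R_{abcd}=-R_{bacd}$, to
\begin{equation*}
2R_{uwzb}-2R_{zbuw}=2S_{uw|zb}-2S_{wz|ub}-2S_{zu|wb}-2S_{zb|wu}-2S_{bw|zu}-2S_{ub|zw},
\end{equation*}
and unwinding $S$ with the antisymmetry of $R_v$ in its first two slots and the symmetry of $C_v$ reproduces the six terms of \eqref{seisB} with exactly the displayed orderings. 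In short: same reduction, but you replace the external citation by an explicit proof; the paper's route is shorter, yours is verifiable in place.
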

\begin{proof}
Let $V,X,Y,Z,W$ be local extensions of $v,u,w,z,b$, respectively, being $V$, $A$-admissible and satisfying \eqref{magiccond}. Then using \cite[Prop. 3.1]{Jav14a}, we easily conclude \eqref{symR} and \eqref{seisB}, because in this case $R_V(X,Y)Z=R^V(X,Y)Z$.
\end{proof}
\subsection{Torsion-free $A$-anisotropic connections and pseudo-Finsler metrics}\label{torsionfree} Assume that $\hat\nabla$ is a torsion-free $A$-anisotropic connection, $(M,L)$ is a pseudo-Finsler manifold as above and define $\Q$ as the $A$-anisotropic tensor
\begin{equation}\label{nablag1}
\Q_v(u,w,z)=(\hat\nabla_ug)_v(w,z)
\end{equation}
for every $v\in A$ and $u,w,z\in T_{\pi(v)}M$.
Then the $A$-anisotropic connection $\hat\nabla$ satisfies a Koszul type formula:
\begin{multline}\label{KoszulQ}
 2 g_v(\hat\nabla^V_XY,Z)= X_{\pi(v)}(g_V(Y,Z))-Z_{\pi(v)} (g_V(X,Y))+Y_{\pi(v)} (g_V(Z,X))\\
+g_v([X,Y],Z)+g_v([Z,X],Y)-g_v([Y,Z],X)\\
2( -C_v(Y,Z,\hat\nabla^V_XV)-C_v(Z,X,\hat\nabla^V_YV)+C_v(X,Y,\hat\nabla^V_ZV))\\
 -\Q_v(Y,Z,X)-\Q_v(Z,X,Y)+\Q_v(X,Y,Z),
\end{multline}
where $V$ is an $A$-admissible  local extension of $v\in A$ and $X,Y,Z$ are arbitrary vector fields. This expression can be obtained as the Koszul formula for the Chern connection using that $\hat \nabla$ is torsion-free and
\begin{multline*}
(\hat\nabla_ug)_v(w,z)=X_{\pi(v)}(g_V(Y,Z))-g_v(\hat\nabla^V_XY,Z)-g_v(Y,\hat\nabla^V_XZ)\\-2C_v(Y,Z,\hat\nabla^V_XV),
\end{multline*}
recall \cite[\S 4.1]{Ja19}.

\begin{prop}\label{torsionconnection}
Given a pseudo-Finsler manifold $(M,L)$ on a conic open subset  $A\subset TM\setminus \bf 0 $ and an $A$-anisotropic tensor $\Q$, there is a unique torsion-free $A$-anisotropic connection $\hat\nabla$  satisfying \eqref{nablag1}. Moreover, if $\Q$ is symmetric,
then $\hat\nabla= \nabla-\frac 1 2 \Q^\flat$, where $\nabla$ is the Chern connection of $L$ and the tensor $\Q^\flat$ is determined by $g_v(\Q^\flat_v(u,w),z)=\Q_v(u,w,z)$.
\end{prop}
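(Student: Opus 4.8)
The plan is to work relative to the Chern connection $\nabla$ of $L$, for which $\nabla g=0$, exactly as one proves the fundamental lemma of Riemannian geometry relative to an arbitrary torsion-free connection. Since any torsion-free $A$-anisotropic connection $\hat\nabla$ differs from $\nabla$ by a symmetric $A$-anisotropic tensor $D\in\mathfrak{T}^1_2(M,A)$ (its difference tensor \eqref{differencetensor}, symmetric because both connections are torsion-free), finding $\hat\nabla$ satisfying \eqref{nablag1} is equivalent to finding a symmetric $D$ solving a single linear equation. To produce that equation I would write the formula for $(\hat\nabla_u g)_v$ displayed just after \eqref{KoszulQ} both for $\hat\nabla$ and for $\nabla$, subtract, and use that $\hat\nabla^V_XV-\nabla^V_XV=D_v(u,v)$ at $p=\pi(v)$. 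This reduces the whole statement to solving
\begin{equation*}
\Q_v(u,w,z)=-g_v(D_v(u,w),z)-g_v(w,D_v(u,z))-2C_v(w,z,D_v(u,v))\tag{$\star$}
\end{equation*}
for the symmetric unknown $D$.

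Next I would solve $(\star)$ by the classical Koszul device. Writing $(\star)$ three times with the arguments $(u,w,z)$ cyclically permuted and forming the combination (first) $+$ (second) $-$ (third), the symmetry of $D$ collapses the purely metric terms and yields
\begin{multline*}
g_v(D_v(u,w),z)=\tfrac12\big[-\Q_v(u,w,z)-\Q_v(w,z,u)+\Q_v(z,u,w)\big]\\
-C_v(w,z,D_v(u,v))-C_v(z,u,D_v(w,v))+C_v(u,w,D_v(z,v)).
\end{multline*}
The one genuinely new feature compared with the Riemannian case is that the right-hand side still contains $D$ through the Cartan terms $C_v(\cdot,\cdot,D_v(\cdot,v))$; this self-reference is the main obstacle, and it is precisely where the homogeneity identity $C_v(v,\cdot,\cdot)=0$ becomes decisive. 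I would resolve it in three nested steps: setting $u=w=v$ kills all three Cartan terms and determines $D_v(v,v)$ outright; setting $w=v$ kills two of the three and expresses $D_v(u,v)$ in terms of the already-known $D_v(v,v)$; and then the general equation determines $g_v(D_v(u,w),z)$ for all $z$, hence $D_v(u,w)$ by nondegeneracy of $g$. Running these steps forward gives existence (define $D$ by the resulting formulas and check, using that $\Q$ is symmetric in its last two slots, that the candidate is symmetric and satisfies $(\star)$), while the fact that each value was forced gives uniqueness.

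For the "moreover" part I would not repeat the construction but test the explicit candidate against the uniqueness just obtained. When $\Q$ is symmetric, the tensor $\Q^\flat$ defined by $g_v(\Q^\flat_v(u,w),z)=\Q_v(u,w,z)$ is symmetric in $(u,w)$, so $\hat\nabla:=\nabla-\tfrac12\Q^\flat$ is torsion-free; by uniqueness it then suffices to verify that $D:=-\tfrac12\Q^\flat$ solves $(\star)$. Substituting and using $g_v(\Q^\flat_v(a,b),c)=\Q_v(a,b,c)$ together with the symmetry of $\Q$ reduces $(\star)$ to the vanishing of the single residual Cartan contraction $C_v(w,z,\Q^\flat_v(u,v))$. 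I expect this last vanishing to be the delicate point of the "moreover" statement, supplied by the full symmetry of $\Q$ together with $C_v(v,\cdot,\cdot)=0$; once it is in hand, the identification $\hat\nabla=\nabla-\tfrac12\Q^\flat$ follows immediately.
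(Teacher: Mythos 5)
Your treatment of the first statement is correct and is essentially the paper's own argument in a different packaging: the paper works directly with the Koszul formula \eqref{KoszulQ} and determines successively $\hat\nabla^V_VV$ (taking $X=Y=V$), then $\hat\nabla^V_XV$ (taking $Y=V$), then the general $\hat\nabla^V_XY$ --- exactly your three nested steps, resting on the same homogeneity identity $C_v(v,\cdot,\cdot)=0$. Your reformulation via the symmetric difference tensor $D=\hat\nabla-\nabla$ and the single equation $(\star)$ is equivalent, and has the small advantage that the axioms of Definition \ref{aniconnection} for $\hat\nabla=\nabla+D$ are automatic once $D$ is exhibited as a tensor; your cyclic formula for $g_v(D_v(u,w),z)$ agrees with what one obtains by subtracting the Koszul formulae of $\hat\nabla$ and $\nabla$. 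So the existence and uniqueness part is fine.

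The gap is in the ``moreover'' part. You correctly reduce the verification of $D=-\tfrac12\Q^\flat$ to the vanishing of $C_v(w,z,\Q^\flat_v(u,v))$, but you then defer this, asserting it should follow from the full symmetry of $\Q$ together with $C_v(v,\cdot,\cdot)=0$. It does not: $\Q^\flat_v(u,v)$ is the $g_v$-dual of the one-form $\Q_v(u,v,\cdot)$, and for a generic symmetric $\Q$ this vector is neither zero nor proportional to $v$, so the Cartan contraction survives. For instance, $\Q_v(a,b,c)=g_v(a,b)\eta(c)+g_v(b,c)\eta(a)+g_v(c,a)\eta(b)$ for a fixed one-form $\eta$ is totally symmetric, yet gives $C_v(w,z,\Q^\flat_v(u,v))=g_v(u,v)C_v(w,z,\eta^\sharp)+\eta(v)C_v(w,z,u)$, which is nonzero in general. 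What actually kills the term is the additional hypothesis $\Q_v(u,v,\cdot)=0$ (equivalently $\Q^\flat_v(u,v)=0$), which is precisely the condition the paper imposes on $\Q$ immediately after this proposition and which holds in all of its applications (Cartan and Landsberg tensors and their combinations). So you must either add that hypothesis to close the argument, or accept that for a general symmetric $\Q$ the difference tensor acquires extra Cartan correction terms beyond $-\tfrac12\Q^\flat$ (as your own nested computation would show: already $D_v(u,v)=-\tfrac12\Q^\flat_v(u,v)+\tfrac12 C_v(\cdot,u,\Q^\flat_v(v,v))^\sharp$). The paper's own proof dismisses this point with ``follows easily from the Koszul formulae,'' but having isolated the obstruction explicitly, you should have noticed that symmetry alone does not discharge it.
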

\begin{proof}
For the first statement, observe that the Koszul formula  when $X=Y=V$, being $V$ an arbitrary extension of  $v$, reduces to
\begin{multline*}
2g_v(\hat\nabla^V_VV,Z)=2 v(g_V(V,Z))-Z_{\pi(v)}(g_V(V,V))+2g_v([Z,V],V)\\-\Q_v(V,Z,V)-\Q_v(Z,V,V)+\Q_v(V,V,Z),
\end{multline*}
and when $Y=V$,
\begin{multline}\label{nablaXV}
2g_v(\hat\nabla^V_XV,Z)=X_{\pi(v)} (g_V(V,Z))-Z_{\pi(v)} (g_V(X,V))+v (g_V(Z,X))\\
+g_v([X,V],Z)+g_v([Z,X],V)-g_v([V,Z],X)\\
 -2C_v(Z,X,\hat\nabla^V_VV)
-\Q_v(V,Z,X)-\Q_v(Z,X,V)+\Q_v(X,V,Z).
\end{multline}
Therefore, $\hat\nabla^V_VV$ and $\hat\nabla^V_XV$ are determined and then \eqref{KoszulQ} completely determines $\hat\nabla^V_XY$. Moreover, from \eqref{KoszulQ} and \eqref{nablaXV}, it is not difficult to prove that $\hat\nabla$ must satisfy the properties $(i)-(iii)$ in Def. \ref{aniconnection} and it is ${\mathcal F}(M)$-linear in $X$. The relation $\hat\nabla=\nabla-\frac 12 \Q^\flat$ follows easily taking into account the Koszul formulae for $\hat\nabla$ and $\nabla$.
\end{proof}
\begin{rem}
It is well-known that geodesics of a pseudo-Finsler metric are the auto-parallel curves of the Levi-Civita-Chern connection $\nabla$. Then an $A$-anisotropic connection $\hat \nabla$ as above has the same auto-parallel curves (including the parametrization) as $\nabla$ if and only if $\Q^\flat_v(v,v)=0$ for every $v\in A$.
\end{rem}
From now on, we will fix a symmetric $A$-anisotropic  tensor $\Q$ satisfying that $ \Q_v(v,u,w)=0$ for every $v\in A$ and $u,w\in T_{\pi(v)}M$ and will denote by $\hat\nabla$ the $A$-anisotropic connection $\hat\nabla$ associated with $\Q$, which is determined by
\begin{equation}\label{nablag=Q}
\hat\nabla g=\Q, 
\end{equation}
 (see Prop. \ref{torsionconnection}). Observe that by the above Remark, $\hat \nabla$ and $\nabla$ have the same auto-parallel curves,  because the property $ \Q_v(v,u,w)=0$  implies that the difference tensor $\frac{1}{2}\Q^\flat$ satisfies that $\frac 12\Q^\flat_v(v,u)=0$ for all $v\in A$ and $u\in T_{\pi(v)}M$.
  Let us see that we can obtain formulas for the variations of the energy with such connections, but before we need some technical results.
 \begin{lemma}\label{lem:sym}
Let $\nabla$ be the Chern connection and $\hat\nabla$ and $\Q$ as in \eqref{nablag=Q}, with $\hat{R}$ the curvature tensor of $\hat\nabla$. Then for $v\in A$ and $u,w,z\in T_{\pi(v)}M$, one has
\begin{enumerate}[(i)]
\item $(\nabla_u \Q^\flat)_v(v,w)=(\nabla_u \Q^\flat)_v(w,v)=0$,
\item $g_v(\Q^\flat_v(u,w),v)=g_v((\nabla_u \Q^\flat)_v(w,z),v)=0$,
\item $g_v(R_v(u,w)z,v)=g_v(\hat{R}_v(u,w)z,v)$,
\item $g_v(\hat{R}_v(u,w)z,v)=-g_v(\hat{R}_v(u,w)v,z)$.
\end{enumerate}
\end{lemma}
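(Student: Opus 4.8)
The plan is to exploit that, by Prop. \ref{torsionconnection}, the difference tensor between $\hat\nabla$ and $\nabla$ is $Q=-\tfrac12\Q^\flat$, which is symmetric because $\Q$ is, and to reduce every derivative computation to the affine setting via the extension of Prop. \ref{assump}. Before anything else I record the purely algebraic consequences of the standing hypotheses $\Q_v(v,u,w)=0$ and the full symmetry of $\Q$: from $g_v(\Q^\flat_v(v,u),z)=\Q_v(v,u,z)=0$ one gets $\Q^\flat_v(v,u)=\Q^\flat_v(u,v)=0$, and from $g_v(\Q^\flat_v(u,w),v)=\Q_v(u,w,v)=0$ one gets the first identity in $(ii)$ immediately.

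For $(i)$ and the second identity in $(ii)$ I would fix $v$ and choose, as in Prop. \ref{assump}, an $A$-admissible extension $V$ with $\nabla^V_XV=0$ at $p=\pi(v)$, so that $(\nabla_X\Q^\flat)_v=(\nabla^V_X(\Q^\flat_V))_p$ is an ordinary covariant derivative. Since $\Q^\flat_V(V,\cdot)\equiv 0$ on $\Omega$, expanding $(\nabla^V_X(\Q^\flat_V))_p(v,w)$ by the Leibniz rule leaves three terms: the derivative of the identically-zero function $\Q^\flat_V(V,W)$, a term carrying the factor $(\nabla^V_XV)_p=0$, and the term $\Q^\flat_v(v,(\nabla^V_XW)_p)=0$; all vanish, giving $(i)$, and symmetry of $\Q^\flat$ supplies the other half. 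For the second identity in $(ii)$ I would use $\nabla g=0$ to write $(\nabla_u\Q)_v(w,z,b)=g_v((\nabla_u\Q^\flat)_v(w,z),b)$, and then run the same extension argument on the $(0,3)$-tensor $\Q$: because $\Q_V(\cdot,\cdot,V)\equiv 0$, the Leibniz expansion of $(\nabla^V_X\Q_V)_p(w,z,v)$ again produces only terms with $V$ in the last slot or with $(\nabla^V_XV)_p$, so $(\nabla_u\Q)_v(w,z,v)=0$, which is exactly $g_v((\nabla_u\Q^\flat)_v(w,z),v)=0$.

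For $(iii)$ I would apply the comparison formula \eqref{Qvuv} of Cor. \ref{Qv0}, legitimate since $\nabla$ is torsion-free and the difference tensor $-\tfrac12\Q^\flat$ satisfies the hypothesis $Q_v(u,v)=0$ by symmetry together with $\Q^\flat_v(u,v)=\Q^\flat_v(v,u)=0$. This expresses $\hat R_v(u,w)z-R_v(u,w)z$ as a $\nabla Q$-difference plus two quadratic $Q$-terms, with $Q=-\tfrac12\Q^\flat$. Pairing with $v$ through $g_v(\cdot,v)$ annihilates the $\nabla Q$-terms by the second identity in $(ii)$ and the quadratic terms by $\Q_v(\cdot,\cdot,v)=0$, yielding $(iii)$. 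Finally, for $(iv)$ I would combine $(iii)$ with the identity $\hat R_v(u,w)v=R_v(u,w)v$ (last statement of Cor. \ref{Qv0}) to transfer both sides to the Chern curvature $R$, and then specialize the symmetry \eqref{symR} to $b=v$; since $C_v(\cdot,\cdot,v)=0$, its right-hand side vanishes and one reads off $g_v(R_v(u,w)z,v)=-g_v(R_v(u,w)v,z)$, which is precisely $(iv)$.

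I expect the only genuinely delicate point to be the bookkeeping in the Leibniz expansions of the second paragraph: one must apply Prop. \ref{assump} to the correct tensor and verify that each surviving term either places $V$ in a slot where $\Q$ or $\Q^\flat$ vanishes or carries the factor $(\nabla^V_XV)_p=0$. The remaining steps are then a matter of invoking \eqref{Qvuv}, \eqref{symR}, and the Cartan-tensor homogeneity relations $C_v(\cdot,\cdot,v)=0$ already recorded in the text.
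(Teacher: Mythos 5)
Your proposal is correct and follows essentially the same route as the paper: parts (i) and (ii) via a Leibniz expansion with an extension $V$ satisfying \eqref{magiccond} together with the properties of $\Q$ and the metric compatibility $\nabla g=0$ of the Chern connection, part (iii) from Corollary \ref{Qv0} plus (ii), and part (iv) by combining (iii), the identity $\hat R_v(u,w)v=R_v(u,w)v$, and \eqref{symR} with $C_v(\cdot,\cdot,v)=0$. The only difference is that you spell out the bookkeeping that the paper's very terse proof leaves implicit.
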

\begin{proof}
For $(i)$ and $(ii)$ use the properties of $\Q$ and an extension $V$ of $v$ satisfying \eqref{magiccond}. In particular, for the last identity in part $(ii)$ use the almost-compatibility with the metric of the Chern connection. Part $(iii)$ is a consequence of part $(ii)$ and Cor. \ref{Qv0}. For part $(iv)$, use Cor. \ref{Qv0}, which in particular implies that  $g_v(\hat{R}_v(u,w)v,z)=g_v(R_v(u,w)v,z)$. Putting together the last identity with part $(iii)$ and  taking into account \eqref{symR}, which implies that $g_v(R_v(u,w)z,v)=-g_v(R_v(u,w)v,z)$, we conclude.
\end{proof}
Recall that the Berwald connection $\tilde\nabla$ is defined for a spray. Indeed, the Christoffel symbols of the Berwald connection are computed as the second derivatives of the coefficients of the spray. Moreover, a pseudo-Finsler metric determines a spray (see \cite{Sh01}) and then an anisotropic Berwald connection (see \cite[Def. 22]{Ja19}). The {\em Berwald tensor} $B$ is defined as the vertical derivative of $\tilde\nabla$,
 (see (6.4) in \cite{Sh01}) and the {\em Chern tensor} $P$ as the vertical derivative of $\nabla$ (see (7.23) in \cite{Sh01}, where it has the opposite sign). As $\tilde{\nabla}$ and $\nabla$ are torsion-free, $B$ and $P$ are symmetric in the first two components, and by homogeneity, it follows that $B_v(u,w,v)=P_v(u,w,v)=0$. Furthermore, the Berwald tensor is symmetric, and then
\begin{equation}\label{Berv0}
B_v(v,u,w)=B_v(u,v,w)=B_v(u,w,v)=0.
\end{equation}
Finally,  the {\em  Landsberg curvature} of a pseudo-Finsler metric $L$ is defined as
 \begin{equation}\label{landsberg}
 \mathfrak{L}_v(u,w,z)=\frac{1}{2}g_v(B_v(u,w,z),v)
 \end{equation}
 (see (6.25) in \cite[Def. 6.2.1]{Sh01} where it has the opposite sign). From \eqref{Berv0}, it follows that 
 \begin{equation}\label{Land0}
\mathfrak{L}_v(v,u,w)=\mathfrak{L}_v(u,v,w)=\mathfrak{L}_v(u,w,v)=0.
\end{equation}
With these definitions, we can write down the difference tensor between the Chern and Berwald connections as 
\begin{equation}\label{differtensor}
 \nabla^v_XY-\tilde{\nabla}^v_XY=\mathfrak{L}^\flat_v(X,Y),
\end{equation}
for any $X,Y\in {\mathfrak X}(M)$, where  $\mathfrak{L}^\flat$ is determined by $g_v(\mathfrak{L}^\flat_v(u,w),z)=\mathfrak{L}_v(u,w,z)$ (see (7.17) in \cite{Sh01} and observe that the notation for the Chern and Berwald connections is changed). 
\begin{lemma}\label{chernvert}
Given a pseudo-Finsler metric $L$, the vertical derivative of its Chern connection satisfies \eqref{vertprop}.
\end{lemma}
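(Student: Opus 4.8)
The plan is to reduce the desired identity $P_v(v,v,u)=0$ (which is exactly condition \eqref{vertprop} for the Chern tensor $P$) to the two vanishing properties \eqref{Berv0} and \eqref{Land0} already recorded, by comparing the Chern connection with the Berwald connection through the difference formula \eqref{differtensor}. Since \eqref{differtensor} says $\nabla^v_XY-\tilde\nabla^v_XY=\mathfrak{L}^\flat_v(X,Y)$, the Christoffel symbols of $\nabla$ and $\tilde\nabla$ differ precisely by the components of $\mathfrak{L}^\flat$. Differentiating this identity with respect to the fibre variable (equivalently, applying Definition \ref{defi:vert} together with the coordinate expression \eqref{Pcoord}) yields the relation $P=B+\partial^\nu\mathfrak{L}^\flat$ between anisotropic tensors, where $B$ is the Berwald tensor and $\partial^\nu\mathfrak{L}^\flat$ is the vertical derivative of the $(1,2)$-tensor $\mathfrak{L}^\flat$.

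Evaluating this relation on $(v,v,u)$, I would first discard the Berwald contribution, since $B_v(v,v,u)=0$ by \eqref{Berv0}. The whole problem then collapses to proving $(\partial^\nu\mathfrak{L}^\flat)_v(v,v,u)=0$. This remaining term can be handled exactly by the device used in Prop. \ref{diffJacobi}(i): one exploits that $\mathfrak{L}$, hence $\mathfrak{L}^\flat$, vanishes whenever any argument coincides with the base direction. Concretely, \eqref{Land0} together with the nondegeneracy of $g_w$ gives $\mathfrak{L}^\flat_w(w,w)=0$ for every $w\in A$, so the function $h(t):=\mathfrak{L}^\flat_{v+tu}(v+tu,v+tu)$ is identically zero for small $t$, where $v+tu\in A$ because $A$ is open.

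Differentiating $h$ at $t=0$ and expanding by multilinearity and the chain rule splits $h'(0)$ into the fibre-derivative term $(\partial^\nu\mathfrak{L}^\flat)_v(v,v,u)$, arising from the moving base point, plus the two slot-derivative terms $\mathfrak{L}^\flat_v(u,v)$ and $\mathfrak{L}^\flat_v(v,u)$. Both slot terms vanish again by \eqref{Land0}, so from $h'(0)=0$ we obtain $(\partial^\nu\mathfrak{L}^\flat)_v(v,v,u)=0$. Combining this with $B_v(v,v,u)=0$ gives $P_v(v,v,u)=0$, which is \eqref{vertprop}.

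I expect the only delicate point to be the bookkeeping in the last step, namely identifying correctly which slot each piece of $h'(0)$ lands on and matching it to the vertical-derivative convention of Definition \ref{defi:vert} (the last entry being the differentiation direction); once this is set up the three contributions fall out immediately. As an alternative, purely computational route one may argue with \eqref{Pcoord}: writing $S^l=\Gamma^l_{ij}y^iy^j$ for the geodesic spray, one has $v^iv^j\,\partial_{y^k}\Gamma^l_{ij}=\partial_{y^k}S^l-2\Gamma^l_{kj}v^j$, and the Chern identity $\Gamma^l_{kj}v^j=\partial_{y^k}G^l$ (with $S^l=2G^l$) forces $v^iv^j\,\partial_{y^k}\Gamma^l_{ij}=0$ directly. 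I prefer the intrinsic Berwald comparison, since it only invokes facts already stated in \eqref{differtensor}, \eqref{Berv0} and \eqref{Land0}.
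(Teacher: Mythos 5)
Your proof is correct and follows essentially the same route as the paper: the paper also reduces to the Berwald connection via the difference tensor $\mathfrak{L}^\flat$ from \eqref{differtensor}, using \eqref{Berv0} and \eqref{Land0}, except that it packages the final step ($\partial^\nu$ of the difference tensor on the diagonal) as a citation of Proposition \ref{diffJacobi}(i)--(ii) rather than differentiating $h(t)=\mathfrak{L}^\flat_{v+tu}(v+tu,v+tu)$ explicitly as you do. The two computations are the same up to bookkeeping, and your handling of the vertical-derivative convention is accurate.
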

\begin{proof}
Observe that the Berwald connection is torsion-free  and its vertical derivative satisfies \eqref{vertprop} (it follows from \eqref{Berv0}). Moreover, the difference tensor between the Chern connection $\nabla$ and the Berwald connection $\tilde\nabla$ is $\mathfrak{L}^\flat$ (see \eqref{differtensor}) and $\mathfrak{L}^\flat_v(v,u)=0$ for every $v\in A$ and $u\in T_{\pi(v)}M$ (it follows from \eqref{Land0}).  By part $(ii)$ of Prop. \ref{diffJacobi}, the vertical derivative of the Chern connection also satisfies \eqref{vertprop}.
\end{proof}
Recall that given  a pseudo-Finsler metric $L:A\rightarrow \R$, for every $v\in A$, one can define the {\it flag curvature} $ K_v:T_{\pi(v)}M\rightarrow \R$ 
using one of the classical linear connections. In particular, when the Chern connection is considered as  an $A$-anisotropic connection, then the flag curvature is expressed 
in terms of its associated  curvature tensor as 
\[K_v(w)=\frac{g_v(R_v(v,w)w,v)}{g_v(w,w)L(v)-g_v(v,w)^2},\]
where $g$ is the fundamental tensor of $L$ and  $w\in T_{\pi(v)}M$. One way to check this formula is by observing that the (non-null) Christoffel symbols of the Chern connection in \cite[Eq. (2.4.9)]{BCS00} as a linear connection coincide with the Christoffel symbols of the $A$-anisotropic Chern connection, and then the flag curvature in  \cite[\S 3.9A]{BCS00} coincides with the one given above (use \eqref{Firstcur2} to check this). By part $(iv)$ of Lemma \ref{lem:sym}, the flag curvature can also be obtained with any of the distinguished $A$-anisotropic connections 
$\hat\nabla$ defined in \eqref{nablag=Q}, replacing in the above formula $R$ by $\hat R$.
\begin{prop}\label{Jacobioper}
Given a pseudo-Finsler manifold $(M,L)$ on $A$, the torsion-free $A$-anisotropic connection $\hat\nabla$ satisfying \eqref{nablag=Q}  determines the same flag curvature, the same Jacobi operator  and the same Jacobi equation \eqref{Jacobieq} and its vertical derivative satisfies \eqref{vertprop} as the Levi-Civita-Chern connection. Moreover, the vertical derivative $\hat P$ of $\hat\nabla$ satisfies also that
\begin{equation}\label{gP}
g_v(\hat P_v(v,u,w),v)=0,
\end{equation}
for every $v\in A$ and $u,w\in T_{\pi(v)}M$.
\end{prop}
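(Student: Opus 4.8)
The plan is to reduce every assertion except the last to results already established, and to treat the identity \eqref{gP} as the only genuinely new computation. Since $\Q$ is symmetric and satisfies $\Q_v(v,u,w)=0$, Proposition \ref{torsionconnection} gives $\hat\nabla=\nabla-\tfrac12\Q^\flat$, so the difference tensor \eqref{differencetensor} of $\hat\nabla$ with respect to the Chern connection is $Q=-\tfrac12\Q^\flat$; as $\Q_v(v,\cdot,\cdot)=0$ forces $\Q^\flat_v(v,\cdot)=0$, this $Q$ satisfies \eqref{cond}. The Chern connection $\nabla$ is torsion-free and, by Lemma \ref{chernvert}, its vertical derivative satisfies \eqref{vertprop}, so Proposition \ref{diffJacobi} applies to the pair $(\nabla,\hat\nabla)$.

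From Proposition \ref{diffJacobi}(ii)--(iii) I immediately obtain that the vertical derivative $\hat P$ of $\hat\nabla$ satisfies \eqref{vertprop} and that $\hat\nabla$ has the same curvature operator, hence the same Jacobi operator (Def. \ref{def:curvoper}), and the same Jacobi equation \eqref{Jacobieq} as $\nabla$. For the flag curvature I would invoke part $(iii)$ of Lemma \ref{lem:sym}: specialising $g_v(R_v(u,w)z,v)=g_v(\hat R_v(u,w)z,v)$ to $u=v$ and $z=w$ yields $g_v(\hat R_v(v,w)w,v)=g_v(R_v(v,w)w,v)$, so the numerator of the flag-curvature formula is unchanged, while the denominator depends only on $g$ and $L$. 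Thus $\hat\nabla$ produces the same flag curvature.

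It remains to establish \eqref{gP}, where the real work lies. Differentiating $\hat\nabla=\nabla+Q$ vertically gives $\hat P_v=P_v+\partial^\nu Q_v$, with $P$ the Chern tensor. By Proposition \ref{diffJacobi}(i), $\partial^\nu Q_v(v,u,w)=-Q_v(w,u)=\tfrac12\Q^\flat_v(w,u)$, so
\[ g_v(\hat P_v(v,u,w),v)=g_v(P_v(v,u,w),v)+\tfrac12\, g_v(\Q^\flat_v(w,u),v). \]
The second term equals $\tfrac12\Q_v(w,u,v)$, which vanishes by the full symmetry of $\Q$ together with $\Q_v(v,\cdot,\cdot)=0$ (permute the first and third slots). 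Hence \eqref{gP} reduces to the single identity $g_v(P_v(v,u,w),v)=0$ for the Chern tensor.

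To settle this I would pass to the Berwald connection $\tilde\nabla$, whose difference with $\nabla$ is $\mathfrak{L}^\flat$ by \eqref{differtensor}. Applying Proposition \ref{diffJacobi}(i) to the pair $(\tilde\nabla,\nabla)$ — legitimate since $\mathfrak{L}^\flat_v(v,\cdot)=0$ by \eqref{Land0} and $\tilde\nabla$ satisfies \eqref{vertprop} by \eqref{Berv0} — gives $\partial^\nu\mathfrak{L}^\flat_v(v,u,w)=-\mathfrak{L}^\flat_v(w,u)$. Since vertical differentiation of \eqref{differtensor} yields $P=B+\partial^\nu\mathfrak{L}^\flat$ and $B_v(v,u,w)=0$ by \eqref{Berv0}, I obtain $P_v(v,u,w)=-\mathfrak{L}^\flat_v(w,u)$; pairing with $v$ and using $\mathfrak{L}_v(\cdot,\cdot,v)=0$ from \eqref{Land0} gives $g_v(P_v(v,u,w),v)=-\mathfrak{L}_v(w,u,v)=0$, which finishes \eqref{gP}. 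The main obstacle is exactly this last chain of reductions: one must track which argument carries $v$ at each stage and invoke the correct symmetry of $\Q$, $B$ and $\mathfrak{L}$, since a single misplaced slot would destroy the cancellation.
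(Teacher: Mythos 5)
Your proposal is correct and follows essentially the same route as the paper: the first claims are reduced to Lemma \ref{chernvert}, Proposition \ref{diffJacobi}(ii)--(iii) and Lemma \ref{lem:sym}(iii), and \eqref{gP} is obtained by decomposing $\hat P$ through the Berwald connection and applying Proposition \ref{diffJacobi}(i) together with \eqref{Berv0} and \eqref{Land0}. The only cosmetic difference is that you carry out the decomposition in two stages (first $\Q^\flat$, then $\mathfrak{L}^\flat$) and track the factor $-\tfrac12$ explicitly, whereas the paper writes $\hat P=B+\partial^\nu\mathfrak{L}^\flat+\partial^\nu\Q^\flat$ in one step.
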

\begin{proof}
 By Lemma \ref{chernvert}, the vertical derivative of the Lev-Civita-Chern connection satisfies \eqref{vertprop}. Then we can apply parts $(ii)$ and $(iii)$ of Prop. \ref{diffJacobi} and part $(iii)$ of Lemma \ref{lem:sym}, which concludes all the claims except \eqref{gP}. In order to prove \eqref{gP}, observe that using \eqref{differtensor}, one gets 
 \[\hat P_v(z,u,w)=B_v(z,u,w)+(\partial^\nu\mathfrak{L}^\flat)_v(z,u,w)+(\partial^\nu\Q^\flat)_v(z,u,w).\]
 Applying part $(i)$ of Prop. \ref{diffJacobi} to $\mathfrak{L}^\flat$ and $\Q^\flat$ and using \eqref{Berv0} and \eqref{Land0}, one easily concludes \eqref{gP} from the last identity.
\end{proof}
\begin{rem}
	Let us observe that the four classical connections provide $A$-anisotropic connections which are distinguished. More precisely,
	\begin{enumerate}[(i)]
\item 	in order to define the $A$-anisotropic connection $\nabla^v_XY$ using a classical linear connection $\nabla^c$ on the vertical fiber bundle, one has to make the derivative with respect to the horizontal lift $X^{\mathcal H}$ of $X$, where the horizontal subbundle is the classical one for a Finsler metric (see \cite[Pag. 35]{BCS00}), namely, $\nabla^v_XY=\nabla^c_{X^{\mathcal H}}Y$. Here we consider $Y\in {\mathfrak X}(M)\subset {\mathcal T}_0^1(M,A)$ or $Y\equiv Y^{\mathcal V}$, being $Y^{\mathcal V}$ the vertical lift of $Y$. 
\item It turns out that the Chern and Cartan connections induce the Levi-Civita-Chern $A$-anisotropic connections, while the Hashiguchi and Ber\-wald connections give the $A$-anisotropic Berwald connection. In the case of the Chern and Berwald connections, this relation is stronger as the classical linear connections are semi-basic, namely, the derivatives with respect to vectors tangent to the vertical subbundle are zero. As a consequence, the non-null Christoffel symbols of the classical connections and its $A$-anisotropic versions coincide. For more details about the relations between derivatives and curvatures see \cite[\S 4.4]{Ja19} and for a detailed study of classical linear connections see \cite{Edu1,Edu2}.
\item It is very easy to generate a large amount of distinguished $A$-anisotropic connections from the Levi-Civita-Chern connection taking as a tensor $\Q$ combinations of the Landsberg and Cartan tensors $f{\mathcal L}+h C$, with arbitrary $f,h\in {\mathcal F}(A)$. Observe that if $f$ and $h$ are not positively homogeneous of degree zero, then the $A$-anisotropic connection will not be homogeneous of degree zero in $v$, but it is easy to see that its auto-parallel curves are the geodesics of the pseudo-Finsler metric affinely parametrized.
\item As we have seen above, this class of distinguished $A$-anisotropic connections allows us to compute the flag curvature of a pseudo-Finsler in a simple way. As we will see in the next section, they also provide suitable formulas for the first and second variation and for Jacobi fields (see Prop. \ref{Jacobioper}). It remains to be investigated which of these connections are more suitable to study certain classes of pseudo-Finsler manifolds. For example, it seems that the Berwald connection has in some sense better properties to study constant flag curvature manifolds than the Chern one. 
	\end{enumerate}
	\end{rem}
\subsection{The variations of the energy functional}\label{sub:variations}
Given a pseudo-Finsler manifold $(M,L)$ on $A$, we will denote by $C_A(M,[a,b])$ the space of $A$-admissible piecewise smooth curves
and  for any 
$A$-admissible piecewise smooth curve $\gamma:[a,b]\subset\R\rightarrow M$, let us define the energy functional as
\begin{equation}
E(\gamma)=\frac 12\int_a^b L(\dot\gamma) ds.
\end{equation}
Recall that if $\Ps$ is a  submanifold of $M$, we say that a vector  $v$ with $\pi(v)\in \Ps$ is  {\it orthogonal} to $\Ps$ if $g_v(v,w)=0$ for all $w\in T_{\pi(v)}\Ps$.  Then, a vector field $N$ along $\Ps$, namely, a smooth map $N:\Ps\rightarrow TM$, such that $\pi\circ N$ is the identity, is said to be orthogonal if $N_p$ is an orthogonal vector for every $p\in \Ps$.
We define  the {\em second fundamental form}
of $\Ps$ in the direction of the orthogonal vector field $N$ computed with the $A$-anisotropic connection $\hat\nabla$ (whenever $\Ps$ is non-degenerate with the metric $g_N$) as the tensor
$\hat S^\Ps_N:\mathfrak{X}(\Ps)\times \mathfrak{X}(\Ps)\rightarrow  \mathfrak{X}(\Ps)^\perp_N$
given by $\hat S_N^\Ps(U,W)={\rm nor}_N \hat\nabla^N_UW$, where ${\rm nor}_N$ is computed with the metric $g_N$, and $\mathfrak{X}(\Ps)^\perp_N$ is the space of $g_N$-orthogonal vector fields to $\Ps$.
\begin{prop}\label{variations}
Let $\hat\nabla$ be any torsion-free $A$-anisotropic connection satisfying \eqref{nablag=Q}, $\hat D_\gamma$, its associated covariant derivative along a piecewise smooth curve $\gamma:[a,b]\rightarrow M$ and $\Lambda$, an $A$-admissible 
piecewise smooth variation of   $\gamma$. Then we have the 
{\it first variation formula}
 \begin{equation}\label{firstvariation}
  \begin{aligned}[m]
  E'(0)&:=\left.\frac{d(E(\gamma_s))}{ds}\right|_{s=0}\\
  &=-\int_a^bg_{\dot\gamma}(W,\hat D^{\dot\gamma}_\gamma\dot\gamma)~dt +
g_{\dot\gamma}(W,\dot\gamma)|^b_a\\
  &\quad\quad\quad +\sum_{i=1}^h\big(\mathcal L_L(\dot\gamma(t_i^+))(W(t_i))-\mathcal L_L(\dot\gamma(t_i^-))(W(t_i))\big),
  \end{aligned}
 \end{equation}
 where $\dot\gamma(t_i^+)$ (resp. $\dot\gamma(t_i^-)$), $i=1,\ldots,h$, denotes the right (resp.\ left)  velocity at the breaks $a<t_1<\ldots<t_h<b$, and $\mathcal L_L(v)(w)=g_v(v,w)$ is the Legendre transform. Moreover, if $\gamma$ is a geodesic which is orthogonal to two  submanifolds $\Ps$ and $\tilde \Ps$ at the endpoints and such that 
$g_{\dot\gamma(a)}|_{\Ps\times \Ps}$ and $g_{\dot\gamma(b)}|_{\tilde \Ps\times \tilde \Ps}$ are nondegenerate,
 consider a smooth $A$-admissible $(\Ps,\tilde \Ps)$-variation (all the curves in the variation start in $\Ps$ and end in $\tilde \Ps$). Then 
\begin{multline*}
E''(0)=\int_a^b \left(-g_{\dot\gamma}(\hat R_{\dot\gamma}(\dot\gamma,W)W,\dot\gamma)+g_{\dot\gamma}(\hat D_\gamma^{\dot\gamma}W,\hat D_\gamma^{\dot\gamma}W)\right)dt\\+
g_{\dot\gamma(b)}(\hat S^\Ps_{\dot\gamma(b)}(W,W),\dot\gamma(b))
-g_{\dot\gamma(a)}(\hat S^{\tilde \Ps}_{\dot\gamma(a)}(W,W),\dot\gamma(a)),
\end{multline*}
where $W$ is the variational vector field of the variation along $\gamma$. 
\end{prop}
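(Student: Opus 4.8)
The plan is to reproduce the classical Riemannian variation argument, exploiting the fact that throughout the computation every covariant derivative and every evaluation of $g$ carries the reference direction $\dot\gamma_s$, so that the Cartan-tensor and $\Q$-correction terms cancel systematically. Write $T=\dot\gamma_s=\partial_t\Lambda$ and $S=\dot\beta_t=\partial_s\Lambda$ for the two canonical fields along $\Lambda$; since $\partial_t,\partial_s$ commute and $\hat\nabla$ is torsion-free, $\hat D^{T}_{\beta_t}T=\hat D^{T}_{\gamma_s}S$. As $L$ is $2$-homogeneous we have $L(v)=g_v(v,v)$, so differentiating $g_{\dot\gamma_s}(\dot\gamma_s,\dot\gamma_s)$ in $s$ with reference $\dot\gamma_s$ via \eqref{productrule}--\eqref{partialT} produces two correction terms: $(\hat\nabla_S g)_{\dot\gamma_s}(\dot\gamma_s,\dot\gamma_s)=\Q_{\dot\gamma_s}(\dot\gamma_s,\dot\gamma_s,\dot\gamma_s)$ and a vertical term proportional to $C_{\dot\gamma_s}(\dot\gamma_s,\dot\gamma_s,\cdot)$, both of which vanish by $\Q_v(v,\cdot,\cdot)=0$ and the homogeneity $C_v(v,\cdot,\cdot)=0$. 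Hence $\tfrac{\partial}{\partial s}L(\dot\gamma_s)=2g_{\dot\gamma_s}(\hat D^{\dot\gamma_s}_{\beta_t}\dot\gamma_s,\dot\gamma_s)=2g_{\dot\gamma_s}(\hat D^{\dot\gamma_s}_{\gamma_s}\dot\beta_t,\dot\gamma_s)$.

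First I would then record the \emph{radial compatibility} identity along $\gamma$: for any $X$ along $\gamma$, $\tfrac{d}{dt}g_{\dot\gamma}(X,\dot\gamma)=g_{\dot\gamma}(\hat D^{\dot\gamma}_\gamma X,\dot\gamma)+g_{\dot\gamma}(X,\hat D^{\dot\gamma}_\gamma\dot\gamma)$, where again the $\Q$- and $C$-corrections drop because one slot equals the reference $\dot\gamma$ (using that $\Q$ is symmetric with $\Q_v(v,\cdot,\cdot)=0$, and $C_v(\cdot,v,\cdot)=0$). Taking $X=\dot\beta_t$ in $E'(s)=\int_a^b g_{\dot\gamma_s}(\hat D^{\dot\gamma_s}_{\gamma_s}\dot\beta_t,\dot\gamma_s)\,dt$, evaluating at $s=0$ and integrating by parts over each smooth piece $[t_{i-1},t_i]$, yields $-\int_a^b g_{\dot\gamma}(W,\hat D^{\dot\gamma}_\gamma\dot\gamma)\,dt$ together with the endpoint and break contributions $g_{\dot\gamma}(W,\dot\gamma)$, which are exactly the Legendre-transform terms $\mathcal L_L(\dot\gamma)(W)$; this produces \eqref{firstvariation}.

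For the second variation, I would differentiate $E'(s)=\int_a^b g_{\dot\gamma_s}(\hat D^{\dot\gamma_s}_{\gamma_s}\dot\beta_t,\dot\gamma_s)\,dt$ once more in $s$ and evaluate at $s=0$, where $\gamma$ is a geodesic and hence auto-parallel for $\hat\nabla$, so $\hat D^{\dot\gamma}_\gamma\dot\gamma=0$. By radial compatibility (reference $\dot\gamma_s$, one slot $\dot\gamma_s$) the integrand derivative splits as $g_{\dot\gamma_s}(\hat D^{\dot\gamma_s}_{\beta_t}\hat D^{\dot\gamma_s}_{\gamma_s}\dot\beta_t,\dot\gamma_s)+g_{\dot\gamma_s}(\hat D^{\dot\gamma_s}_{\gamma_s}\dot\beta_t,\hat D^{\dot\gamma_s}_{\beta_t}\dot\gamma_s)$. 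In the first summand I would commute the covariant derivatives using the curvature operator \eqref{RLambda} with $W=\dot\beta_t$: the term $\hat P_{\dot\gamma_s}(\dot\beta_t,\dot\beta_t,\hat D^{\dot\gamma_s}_{\gamma_s}\dot\gamma_s)$ dies at $s=0$ because $\hat D^{\dot\gamma}_\gamma\dot\gamma=0$, while the surviving term $\hat P_{\dot\gamma}(\dot\gamma,\dot\beta_t,\hat D^{\dot\gamma}_{\beta_t}\dot\gamma)$, once paired against $\dot\gamma$, vanishes by \eqref{gP} of Prop. \ref{Jacobioper}, i.e. $g_v(\hat P_v(v,\cdot,\cdot),v)=0$. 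This leaves $g_{\dot\gamma}(\hat D^{\dot\gamma}_\gamma\hat D^{\dot\gamma}_{\beta_t}\dot\beta_t,\dot\gamma)-g_{\dot\gamma}(\hat R_{\dot\gamma}(\dot\gamma,W)W,\dot\gamma)$, while the second summand becomes $g_{\dot\gamma}(\hat D^{\dot\gamma}_\gamma W,\hat D^{\dot\gamma}_\gamma W)$ after $\hat D^{\dot\gamma}_{\beta_t}\dot\gamma=\hat D^{\dot\gamma}_\gamma\dot\beta_t$. Integrating the $\hat D^{\dot\gamma}_\gamma\hat D^{\dot\gamma}_{\beta_t}\dot\beta_t$ term by parts (its $\hat D^{\dot\gamma}_\gamma\dot\gamma$ companion again vanishing) converts it to the boundary value $g_{\dot\gamma}(\hat D^{\dot\gamma}_{\beta_t}\dot\beta_t,\dot\gamma)\big|_a^b$; since each transverse curve $\beta_a,\beta_b$ lies in $\Ps,\tilde\Ps$ and $\dot\gamma$ is $g_{\dot\gamma}$-orthogonal there, taking the $g_{\dot\gamma}$-normal component identifies these with the second-fundamental-form terms $g(\hat S_{\dot\gamma}(W,W),\dot\gamma)$, giving the stated $E''(0)$.

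The main obstacle is not the curvature commutation but the disciplined bookkeeping of the vertical ($\hat P$- and Cartan-) correction terms: every step hinges on verifying that the corrections generated by $\hat\nabla g=\Q\neq 0$ and by the variation of the reference direction $\dot\gamma_s$ genuinely cancel. This works only because each such term is evaluated with $\dot\gamma$ occupying a distinguished slot, where $\Q_v(v,\cdot,\cdot)=0$, the homogeneity $C_v(v,\cdot,\cdot)=0$, and crucially the identity \eqref{gP} apply. Establishing these vanishings carefully — especially the one forcing the $\hat P$-term in the second variation to disappear, which is precisely the property singling out the connections $\hat\nabla$ of \eqref{nablag=Q} — is the delicate part; the remainder parallels the Riemannian second-variation computation.
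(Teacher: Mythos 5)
Your argument is correct and is in substance the proof the paper intends: the paper simply cites the computations of \cite{JaSo14} and then notes, exactly as you do, that the identity \eqref{gP} makes the $\hat P$-correction in the curvature commutation disappear when paired against $\dot\gamma$, so that $\hat R_{\dot\gamma}$ may replace the operator $\hat R^\gamma$ of \cite{JaSo14}. Your write-up merely makes explicit the bookkeeping of the Cartan- and $\Q$-corrections (via $C_v(v,\cdot,\cdot)=0$ and $\Q_v(v,\cdot,\cdot)=0$) that the citation outsources, and it identifies the same key vanishing as the crux of the proof.
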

\begin{proof}
The formulas can be obtained for example as in \cite[Prop. 3.1 and 3.2 and Cor. 3.8]{JaSo14}  with one exception, since in  \cite[Prop. 3.2]{JaSo14}, $g_{\dot\gamma}(\hat R_{\dot\gamma}(\dot\gamma,W)W,\dot\gamma)$ is replaced with $g_{\dot\gamma}( \hat R^\gamma(\dot\gamma,W)W,\dot\gamma)$. Observe that from \eqref{gP}, 
t follows that  
\[g_{\dot\gamma}(\hat R_{\dot\gamma}(\dot\gamma,W)W,\dot\gamma)=g_{\dot\gamma}( \hat R^\gamma(\dot\gamma,W)W,\dot\gamma),\]
(recall that $\hat R^\gamma$ coincides with $\hat R_\Lambda$ defined just before \eqref{RLambda} without the $P$-terms), which concludes.
\end{proof}
 
\subsection{The osculating metric}\label{sub-osculating}
If we fix a vector field $V$ in an open subset $\Omega\subset M$, then we can consider the osculating metric $g_V$ and its Levi-Civita connection $\overline{\nabla}$. Let us compare now  both connections. In the particular case of the Chern connection, this can be found for example in \cite[Prop. 8.4.3]{Sh01}.
\begin{prop}\label{oscul}
Given an $A$-admissible vector field $V\in \mathfrak{X}(\Omega)$, with $\Omega$ an open subset of a manifold $M$, and a pseudo-Finsler metric $L:A\rightarrow \R$, let $\overline{\nabla}$ be the Levi-Civita connection of $g_V$ and $\hat\nabla$ satisfying \eqref{nablag=Q}. Then   
\begin{multline*}
g_V(\hat\nabla^V_XY-\overline{\nabla}_XY,Z)=-C_V(Y,Z,\hat\nabla^V_XV)-C_V(Z,X,\hat\nabla^V_YV)\\+C_V(X,Y,\hat\nabla^V_ZV)
-\frac 12\Q_V(X,Y,Z).
\end{multline*}
In particular,
\begin{align*}
g_V(\hat\nabla^V_XV-\overline{\nabla}_XV,Z)&=-C_V(Z,X,\hat\nabla^V_VV),\\
g_V(\hat\nabla^V_VX-\overline{\nabla}_VX,Z)&=-C_V(X,Z,\hat\nabla^V_VV),\\
g_V(\hat\nabla^V_XY-\overline{\nabla}_XY,V)&=C_V(X,Y,\hat\nabla^V_VV).
\end{align*}
When $V$ is a geodesic vector field, then $\hat\nabla^V_XV=\overline{\nabla}_XV$, $\hat\nabla^V_VX=\overline{\nabla}_VX$ and 
\[{\hat R}_V(V,X)V= \overline{R}(V,X)V,\]
where $\hat R$ and $\overline{R}$ are the curvature tensors associated with $\hat \nabla$ and $\overline{\nabla}$, respectively. 
\end{prop}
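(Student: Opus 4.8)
The plan is to derive everything from two Koszul formulas. First I would write the classical Koszul formula for the Levi-Civita connection $\overline{\nabla}$ of the fixed metric $g_V$ on $\Omega$, that is, $2g_V(\overline{\nabla}_XY,Z)$ equals the usual combination of directional derivatives of $g_V$ and bracket terms. Evaluating the Koszul-type identity \eqref{KoszulQ} for $\hat\nabla$ at the point $v=V_p$, I would observe that its first two lines are \emph{exactly} this classical expression: using the symmetry of $g_V$, the bracket terms $g_v([X,Y],Z)+g_v([Z,X],Y)-g_v([Y,Z],X)$ match the Levi-Civita ones. Subtracting the two formulas then isolates the Cartan and $\Q$ contributions. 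Since $\Q$ is fully symmetric, the three terms $-\Q_V(Y,Z,X)-\Q_V(Z,X,Y)+\Q_V(X,Y,Z)$ collapse to $-\Q_V(X,Y,Z)$; dividing by two yields the displayed formula for $g_V(\hat\nabla^V_XY-\overline{\nabla}_XY,Z)$. This first part is pure bookkeeping and presents no real difficulty.

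For the three particular cases I would substitute in turn $Y=V$, then $X=V$, then $Z=V$ into the main formula and invoke the vanishing $C_v(v,\cdot,\cdot)=C_v(\cdot,v,\cdot)=C_v(\cdot,\cdot,v)=0$ together with $\Q_v(v,\cdot,\cdot)=0$ and the symmetry of $\Q$. In each case all but one Cartan term and the whole $\Q$-term drop out, leaving respectively $-C_V(Z,X,\hat\nabla^V_VV)$, $-C_V(X,Z,\hat\nabla^V_VV)$ and $C_V(X,Y,\hat\nabla^V_VV)$, as claimed.

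For the geodesic case, $V$ being a geodesic vector field means $\hat\nabla^V_VV=0$ at every point of $\Omega$. Feeding this into the first two particular cases and using the nondegeneracy of $g_V$ yields $\hat\nabla^V_XV=\overline{\nabla}_XV$ and $\hat\nabla^V_VX=\overline{\nabla}_VX$; in particular $\overline{\nabla}_VV=\hat\nabla^V_VV=0$, so $V$ is geodesic for $\overline{\nabla}$ as well. The key consequence is that the difference tensor $D:=\hat\nabla^V-\overline{\nabla}$, a genuine $(1,2)$-tensor on $\Omega$, now satisfies $D(\cdot,V)=0$ and $D(V,\cdot)=0$ identically throughout $\Omega$.

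Finally, for the curvature identity I would first reduce the anisotropic curvature to the osculating one: applying \eqref{RvRV} to $\hat\nabla$ with arguments $(V,X)V$, the two $\hat P$-correction terms vanish, one because its last slot is $\hat\nabla^V_VV=0$, the other because $\hat P_V(V,V,\cdot)=0$ by \eqref{vertprop} (Prop.~\ref{Jacobioper}), so $\hat R_V(V,X)V=(\hat R^V)_p(V,X)V$. Then I would invoke the standard formula comparing the curvatures of two affine connections differing by $D$,
\[
\hat R^V(V,X)V-\overline{R}(V,X)V=(\overline{\nabla}_V D)(X,V)-(\overline{\nabla}_X D)(V,V)+D(V,D(X,V))-D(X,D(V,V)).
\]
Because $D(\cdot,V)=D(V,\cdot)=0$ and $\overline{\nabla}_VV=0$, every term on the right vanishes (the covariant derivatives of the identically-zero expressions $D(X,V)$ and $D(V,V)$ being handled by the Leibniz rule for $\overline{\nabla}_X D$ and $\overline{\nabla}_V D$), giving $\hat R^V(V,X)V=\overline{R}(V,X)V$ and hence ${\hat R}_V(V,X)V=\overline{R}(V,X)V$. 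The one place demanding care is precisely this last step: one must keep $D(\cdot,V)$ and $D(V,\cdot)$ as tensor identities valid on all of $\Omega$, not merely at a single point, so that their covariant derivatives genuinely vanish, which is exactly what the geodesic hypothesis guarantees.
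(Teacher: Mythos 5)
Your proposal is correct and follows essentially the same route as the paper: the difference formula and its special cases are read off from the Koszul-type identity \eqref{KoszulQ} together with the vanishing properties of $C$ and $\Q$, and the curvature identity is obtained by first dropping the $\hat P$-correction terms in \eqref{RvRV} via the geodesic hypothesis and \eqref{vertprop}, then transferring $\hat\nabla^V$ to $\overline{\nabla}$ using $\hat\nabla^V_XV=\overline{\nabla}_XV$ and $\hat\nabla^V_VX=\overline{\nabla}_VX$. The only cosmetic difference is that you package the last step as the general difference-tensor comparison of affine curvatures, whereas the paper substitutes term by term in $\hat R^V(V,X)V=\hat\nabla^V_V\hat\nabla^V_XV-\hat\nabla^V_{[X,V]}V$; both hinge on the same observation, correctly emphasized in your write-up, that $D(\cdot,V)$ and $D(V,\cdot)$ vanish identically on $\Omega$.
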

\begin{proof}
The formulas for the difference between $\hat\nabla$ and $\overline{\nabla}$ are a consequence of the Koszul formula \eqref{KoszulQ}. For the equality between the curvature tensors, observe that as the vertical derivative of $\hat\nabla$ satisfies \eqref{vertprop}, then using that $V$ is a geodesic vector field and the relations between $\hat\nabla$ and $\overline{\nabla}$, it follows that
\[\hat R_V(V,X)V=\hat\nabla^V_V\hat\nabla^V_XV-\hat\nabla^V_{[X,V]}V=\bar R(V,X)V.\]
\end{proof}
\section*{Acknowledgments}
The author warmly acknowledges useful discussions with Professors Amir Aazami (Clark University, USA) and Eduardo Mart\'inez (University of Zaragoza, Spain), as well as some improvements suggested by the referees.
 
 The research is a result of the activity developed within the framework of the Programme in Support of Excellence Groups of the Regi\'on de Murcia, Spain, by Fundaci\'on S\'eneca, Science and Technology Agency of the Regi\'on de Murcia.  It was partially supported by MINECO/FEDER project MTM2015-65430-P, MICINN/FEDER project  PGC2018-097046-B-I00  and Fundaci\'on S\'eneca project 19901/GERM/15, Spain.

 
 \end{document}